\author{Valentin Plechinger}
\date{April 2018}
\newtheorem{theorem}{Theorem}[section]
\newtheorem{lemma}[theorem]{Lemma}
\newtheorem{proposition}[theorem]{Proposition}
\newtheorem{corollary}[theorem]{Corollary}
\newtheorem{remark}[theorem]{Remark}
\newtheorem{definition}[theorem]{Definition}
\newtheorem{named-theorem}[theorem]{}
\newcounter{example}
\newenvironment{example}[1][]{\refstepcounter{example}\par\medskip\noindent%
  \textbf{Example. #1} \rmfamily}{\medskip}
\newcommand*{\N}{\mathbb{N}}
\newcommand*{\Z}{\mathbb{Z}}
\newcommand*{\C}{\mathbb{C}}
\newcommand*{\shf}[1]{\mathcal{O}_{#1}}
\newcommand*{\ra}{\rightarrow}
\newcommand*{\xra}{\xrightarrow}
\newcommand*{\coh}[3]{H^{#1}(#2,#3)}
\newcommand*{\cstar}{\mathbb{C}^*}
\newcommand*{\lb}{\mathcal{L}}
\newcommand*{\iso}{\cong}
\newcommand*{\cat}[1]{\mathbf{#1}}
\newcommand*{\An}{\cat{An}}
\newcommand*{\Vect}{\cat{Vec}}
\newcommand*{\Set}{\cat{Set}}
\newcommand*{\ff}{\mathcal{F}}
\newcommand*{\Tau}{\mathcal{T}}
\newcommand*{\poincare}{{\mathfrak L}_{x_0}}
\newcommand*{\id}{\mathrm{id}}
\DeclareMathOperator{\NS}{NS}
\DeclareMathOperator{\Pic}{Pic}
\DeclareMathOperator{\Picaff}{Picaff}
\DeclareMathOperator{\GL}{GL}
\DeclareMathOperator{\Aff}{Aff}
\DeclareMathOperator{\Affshf}{\mathcal{A}}
\DeclareMathOperator{\pr}{pr}
\DeclareMathOperator{\Ext}{Ext}
\DeclareMathOperator{\Hom}{Hom}
\DeclareMathOperator{\shom}{\mathscr{H}\text{\kern -3pt {\calligra\large om}}\,}
\newcommand*{\gaff}{\Aff(1,\C)}
\def\textmap#1{\mathop{\vbox{\ialign{
                                  ##\crcr
      ${\scriptstyle\hfil\;\;#1\;\;\hfil}$\crcr
      \noalign{\kern 1pt\nointerlineskip}
      \rightarrowfill\crcr}}\;}}
\def\resto#1#2{{
#1\hskip 0.4ex\vline_{\hskip 0.2ex\raisebox{-0,2ex}
{{${\scriptstyle #2}$}}}}}
\begin{document}
\title{Classifying affine line bundles on a compact complex space}
\maketitle
\begin{abstract}
The classification of affine line bundles on a compact complex space is a
difficult problem. We study the affine analogue of the Picard functor and the
representability problem for this functor. Let $X$ be a compact complex space
with $H^0({\cal O}_X)=\C$. Fix $c\in\NS(X)$, $x_0\in X$. We introduce the affine
Picard functor $\Picaff_{X,x_0}^c:\An^{op}\ra \Set$ which assigns to a complex space
$T$ the set of families of linearly $x_0$-framed affine line bundles on $X$ parameterized
by $T$. Our main result states that the functor $\Picaff_{X,x_0}^c$ is
representable if and only if the   map $h^0:\Pic^c(X)\to\N$ is constant. If this
is the case, the space which represents this functor is a linear space over
$\Pic^c(X)$ whose underlying set is $\coprod_{l\in \Pic^c(X)}
H^1(\resto{\poincare}{\{l\}\times X})$, where   $\poincare$ is a Poincaré line
bundle normalized at $x_0$. 

The main idea idea of the proof is to compare the representability of $\Picaff_{X,x_0}^c$ to the representability of a functor considered by Bingener related to the deformation theory of $p$-cohomology classes. 

Our arguments show in particular that,  for $p=1$, the converse of  Bingener's representability criterion holds.     
\end{abstract}


\section{Introduction}

Let $X$ be compact complex space. The Picard $\Pic_X$ functor associated with $X$ is defined by $\Pic_X(T):=\Pic(T\times X/T)$, where

$$\Pic(T\times X/T):=\coh{0}{T}{R^1f_{T_*}(\shf{T\times X}^*)}\,. $$

A fundamental result of Grothendieck \cite[Théorème 3.3]{grothendieck1} states
that this functor is representable by a complex space endowed naturally with a
complex group structure.   

Bingener proved a relative version of this theorem: the relative Picard
functor $\Pic_{X/S}$ associated with a morphism $X\xra{f} S$ is representable
if $f$ is flat, proper and 
cohomologically flat in dimension $0$ \cite[Kapitel II.6]{bingener1}. He also
gave several general representability criteria for similar functors. 
In this paper we are interested in the classification of affine line bundles
over a compact complex space $X$, and the corresponding functor 
$\Picaff_X$. By a classical theorem \cite[p. 41, Theorem 3.2.1]{hirzebruch} 
the isomorphism classes of holomorphic affine line bundles on $X$ correspond
bijectively to the elements of the cohomology \emph{set} 
$\Picaff(X)=\coh{1}{X}{\Affshf(1)}$, where $\Affshf(1)$ denotes the sheaf of
(locally defined) holomorphic 
functions with values in the affine group $\gaff$. Since this group is
non-abelian, the classification problem for affine line bundles is more
difficult and interesting than the one for (linear) line bundles.
There is a subtle problem when one tries to find an alternative description for
$\Picaff(X)$. 
\begin{theorem}
  The isomorphism classes of affine line bundles on $X$ correspond bijectively to
  isomorphism classes of sheaf epimorphisms $p:\mathcal{E}\to {\cal O}_X$, where $\mathcal{E}$ is a locally free sheaf of rank 2.
  \end{theorem}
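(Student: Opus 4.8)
The plan is to realize $\gaff$ as the stabilizer of a vector in the standard representation and thereby turn both classification problems into the same non-abelian cohomology set. Concretely, embed $\gaff$ into $\GL_2(\C)$ as the group of matrices $\left(\begin{smallmatrix} a & b \\ 0 & 1 \end{smallmatrix}\right)$ with $a\in\cstar$, $b\in\C$. A one-line computation shows that this is exactly the stabilizer, inside $\GL_2(\C)$, of the linear projection $\C^2\to\C$, $(x,y)\mapsto y$: a matrix $\left(\begin{smallmatrix} \alpha & \beta \\ \gamma & \delta \end{smallmatrix}\right)$ preserves the second coordinate of every vector iff $\gamma=0$ and $\delta=1$. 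Passing to sheaves of holomorphic functions, this identifies $\Affshf(1)$ with the subsheaf of $\GL_2(\shf{X})$ fixing the surjection $\shf{X}^{\oplus 2}\to\shf{X}$ onto the second factor. Using the classification $\Picaff(X)=\coh{1}{X}{\Affshf(1)}$ recalled above, it suffices to set up mutually inverse maps between $\coh{1}{X}{\Affshf(1)}$ and the set of isomorphism classes of epimorphisms $p\colon\mathcal{E}\to\shf{X}$ with $\mathcal{E}$ locally free of rank $2$.

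For the forward map I would start from a cocycle $(g_{ij})$ with values in $\Affshf(1)$ subordinate to an open cover $(U_i)$. Viewed through the inclusion $\Affshf(1)\hookrightarrow\GL_2(\shf{X})$, this cocycle glues the trivial sheaves $\shf{U_i}^{\oplus 2}$ into a rank-$2$ locally free sheaf $\mathcal{E}$. Because the bottom row of each $g_{ij}$ is $(0,1)$, the local projections onto the second factor agree on overlaps and patch to a global epimorphism $p\colon\mathcal{E}\to\shf{X}$; its kernel is the line bundle with cocycle $(a_{ij})$, i.e. the linearization of the affine bundle. Cohomologous cocycles yield isomorphic pairs $(\mathcal{E},p)$, so this descends to isomorphism classes.

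For the reverse map, given an epimorphism $p\colon\mathcal{E}\to\shf{X}$ I would use that $\shf{X}$ is locally free of rank one, hence the exact sequence $0\to\ker p\to\mathcal{E}\xra{p}\shf{X}\to 0$ admits local sections and splits locally; this produces, over a suitable cover, local frames $(e_1^{(i)},e_2^{(i)})$ of $\mathcal{E}$ with $e_1^{(i)}$ generating $\ker p$ and $p(e_2^{(i)})=1$. Any change-of-frame matrix $g_{ij}$ between two such adapted frames must carry $\ker p$ into $\ker p$ and preserve the value of $p$, so by the stabilizer computation above it takes values in $\Affshf(1)$. The resulting class in $\coh{1}{X}{\Affshf(1)}$ is independent of the chosen adapted frames, since any two adapted frames differ by an $\Affshf(1)$-valued $0$-cochain, which alters the cocycle only by a coboundary.

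The two constructions are inverse once the notions of isomorphism are aligned: an isomorphism of pairs is a sheaf isomorphism $\phi\colon\mathcal{E}\to\mathcal{E}'$ with $p'\circ\phi=p$, and in adapted frames such a $\phi$ is precisely an $\Affshf(1)$-valued $0$-cochain relating the two cocycles. The step I expect to carry the real content, rather than formal bookkeeping, is this reverse direction: one must genuinely produce \emph{adapted} local frames from the mere surjectivity of $p$, and then verify that the non-abelian coboundary relation matches isomorphism of pairs on the nose. Only after this does the correspondence descend to an honest bijection of pointed sets between $\coh{1}{X}{\Affshf(1)}$ and the isomorphism classes of such epimorphisms, rather than a mere surjection of sets.
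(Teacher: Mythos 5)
Your argument is correct and follows essentially the same route as the paper: the identical embedding $\varphi:\gaff\hookrightarrow\GL(2,\C)$ and the observation that $\pr_2:\C^2\to\C$ is $\gaff$-invariant are exactly what underlie the paper's functors $\Phi$ (the associated bundle $F(A)\times_\varphi\C^2$ with its induced epimorphism) and $\Phi'$ (the preimage of the section $1$); you simply phrase the construction in \v{C}ech-cocycle language rather than via frame and associated bundles. Since the paper defers the detailed verification to \cite{plechinger}, your adapted-frame and coboundary bookkeeping is a legitimate way of supplying precisely those omitted details.
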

One can try to use this bijection to give a set-theoretical description of $\Picaff(X)$. 
 The kernel of such en epimorphism is a locally free sheaf of rank $1$ (i.e. a line
 bundle on $X$). Choosing a Poincaré line bundle $\poincare$ on $\Pic(X)\times
 X$ one obtains an identification (see section \ref{sec:groupoids}): 
\begin{equation}
\Picaff(X)=\coprod_{l\in \Pic(X)}
\coh{1}{X}{\resto{\poincare}{\{l\}\times X}}/\cstar\,.
\end{equation}
This quotient is non-Hausdorff  (unless $h^1$ is identically zero on $\Pic(X)$),  so it does have a complex analytic structure compatible with the quotient topology.

A natural idea to overcome this difficulty is to consider linearly \emph{framed} affine line
bundles in the following sense:

 Fix a  point $x_0\in X$, and define a sheaf $\Affshf(1)_{x_0}$ by
  $$\Affshf(1)_{x_0}(U)=\{f\in \Affshf(1)(U)|\ \lambda(f(x_0))=1 \text{ if } x_0\in
  U\}\,,$$
  where $\lambda:\gaff\to\C^*$ is the natural group morphism.
  Furthermore we define
  $$\Picaff(X)_{x_0}= \coh{1}{X}{\Affshf(1)_{x_0}}\,.$$
Moreover, for a class $c\in\NS(X)$ we denote by $\Picaff(X)_{x_0}^c$ the pre-image of $\Pic^c(X)$ under the natural map $\coh{1}{X}{\Affshf(1)_{x_0}}\to \coh{1}{X}{{\cal O}^*_X}$. 
 Then 
the following result hold.
\begin{theorem}
  $\Picaff(X)_{x_0}$ $(\Picaff(X)_{x_0}^c)$ is naturally identified to the set of isomorphism classes of pairs $(A,\epsilon)$,
  where $A$ is an affine line bundle on $X$ (of Chern class $c$) and $\epsilon:\C \to \lambda(A)_{x_0}$ is a
  linear isomorphism. Morphisms of such pairs are defined in the obvious way.
  Furthermore one has a natural set theoretical identifications:
  \begin{equation}
    \label{eq:master}
\begin{split}
  \Picaff(X)_{x_0}&=\coprod_{l\in \Pic(X)}
\coh{1}{X}{\resto{\poincare}{\{l\}\times X}}\,, \\
 \Picaff(X)_{x_0}^c&=\coprod_{l\in \Pic^c(X)}
\coh{1}{X}{\resto{\poincare}{\{l\}\times X}}\,.
\end{split}
  \end{equation}
 \end{theorem}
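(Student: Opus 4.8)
The plan is to argue in two stages: first identify $\Picaff(X)_{x_0}$ with the set of isomorphism classes of framed pairs $(A,\epsilon)$, then fiber this set over $\Pic(X)$ by means of the linear-part morphism. For the first stage I would use the standard description of $H^1$ with values in a sheaf of groups as isomorphism classes of torsors. An $\Affshf(1)$-valued \v{C}ech cocycle $(g_{ij})$ represents an affine line bundle $A$, and its image $(\lambda(g_{ij}))$ under the linear-part morphism is the cocycle of the linearization $\lambda(A)$. When the cocycle is valued in the subsheaf $\Affshf(1)_{x_0}$, the normalization $\lambda(g_{ij})(x_0)=1$ on the charts meeting $x_0$ glues the fibers of $\lambda(A)$ over $x_0$ to a distinguished trivialization, that is, a linear isomorphism $\epsilon\colon\C\to\lambda(A)_{x_0}$; conversely a pair $(A,\epsilon)$ determines, after choosing charts and a trivialization of $\lambda(A)$ compatible with $\epsilon$ near $x_0$, a cocycle in $\Affshf(1)_{x_0}$. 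Comparing two such choices and passing to a common refinement shows that isomorphism classes of pairs correspond bijectively to $\Picaff(X)_{x_0}$, and the same argument restricted to a fixed Chern class gives the $c$-version.

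For the decomposition I would exploit the semidirect-product structure $\gaff=\C\rtimes\cstar$, which yields the short exact sequence of sheaves of groups
$$1\to\shf{X}\to\Affshf(1)_{x_0}\xra{\lambda}(\shf{X}^*)_{x_0}\to 1,$$
with abelian translation kernel $\shf{X}$ and quotient the sheaf $(\shf{X}^*)_{x_0}$ of invertible functions equal to $1$ at $x_0$. I would first identify $H^1(X,(\shf{X}^*)_{x_0})$ with $\Pic(X)$: from the sequence $1\to(\shf{X}^*)_{x_0}\to\shf{X}^*\to\C^*_{x_0}\to1$, where the last map is evaluation at $x_0$ into the skyscraper $\C^*_{x_0}$, the associated long exact sequence together with $H^0(\shf{X}^*)=\cstar$ (compactness and $H^0(\shf{X})=\C$), the fact that $\cstar\to\C^*$ is the identity, and $H^1(\C^*_{x_0})=0$, gives the isomorphism. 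This defines a map $\lambda_*\colon\Picaff(X)_{x_0}\to\Pic(X)$, which is surjective since every line bundle $L$ is the linearization of an affine bundle, for instance of $L$ itself viewed as an affine line bundle.

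The crux is the computation of the fibers of $\lambda_*$, which I would carry out by twisting the displayed sequence by a cocycle representing a framed class $l\in\Pic(X)$. Because conjugation in $\gaff$ acts on the translations by $(a,0)(1,b)(a,0)^{-1}=(1,ab)$, twisting $\shf{X}$ by the $l$-cocycle produces exactly the sheaf of sections of the corresponding line bundle, which by the normalization of the Poincar\'e bundle at $x_0$ is $\resto{\poincare}{\{l\}\times X}$. The exact sequence of (non-abelian) cohomology then identifies the fiber over $l$ with the quotient of $\coh{1}{X}{\resto{\poincare}{\{l\}\times X}}$ by the action of $H^0(X,(\shf{X}^*)_{x_0})$. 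Here the framing is decisive: a global section of $(\shf{X}^*)_{x_0}$ is a global invertible function equal to $1$ at $x_0$, hence the constant $1$, so $H^0(X,(\shf{X}^*)_{x_0})=\{1\}$ and the action is trivial. The fiber is therefore exactly $\coh{1}{X}{\resto{\poincare}{\{l\}\times X}}$, with no quotient, in contrast with the unframed situation where $H^0(\shf{X}^*)=\cstar$ produces the scalar quotient in the earlier set-theoretic description of $\Picaff(X)$. Taking the disjoint union over $l\in\Pic(X)$ yields the first identity, and restricting $\lambda_*$ to the preimage of $\Pic^c(X)$ yields the second. The principal obstacle is precisely this last step: making the non-abelian twisting rigorous and verifying that the vanishing of $H^0(X,(\shf{X}^*)_{x_0})$ is exactly what rigidifies the framed bundles and eliminates the scalar ambiguity responsible for non-representability in the unframed case.
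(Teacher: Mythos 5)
Your proof is correct, but it takes a genuinely different route from the paper. The paper never writes down the non-abelian cohomology sequence; instead it establishes a chain of groupoid equivalences $\cat{A}_{x_0}\simeq\cat{E}_{x_0}\simeq\cat{L}_{x_0}$ (framed affine line bundles $\leftrightarrow$ framed rank-2 epimorphisms $\mathcal{E}\to\shf{X}$ $\leftrightarrow$ triples $(\lb,h,\tau)$ with $h\in\coh{1}{X}{\lb}$ and $\tau$ a frame of $\lb$ at $x_0$), via the frame bundle, the associated rank-2 bundle, and the extension class of $0\to\lb\to\mathcal{E}\to\shf{X}\to0$; it then observes that for each triple there is a \emph{unique} isomorphism $f_\tau:\lb\to\resto{\poincare}{l}$ compatible with the frames (because the set of isomorphisms is an $H^0(\shf{X}^*)=\cstar$-torsor and the frame picks one element), yielding a well-defined class $f_\tau(h)\in\coh{1}{X}{\resto{\poincare}{\{l\}\times X}}$. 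Your argument reaches the same conclusion by twisting the sequence $1\to\shf{X}\to\Affshf(1)_{x_0}\to(\shf{X}^*)_{x_0}\to 1$ and using $H^0(X,(\shf{X}^*)_{x_0})=\{1\}$ to kill the quotient; this is the same rigidity mechanism in cocycle language (in both cases the frame eliminates exactly the $\cstar$ responsible for the quotient in the unframed description of $\Picaff(X)$). What the paper's route buys is the extension-theoretic picture, which it reuses verbatim in the relative setting of section~\ref{sec:relativecase} and which drives the comparison with Bingener's functor; your route is shorter and more self-contained for the pointwise statement, but the non-abelian twisting formalism (fibers of $H^1(B)\to H^1(C)$ as orbits of $H^0$ of the twisted quotient on $H^1$ of the twisted kernel) should be cited or proved, and one should still record the canonical frame-compatible identification of the twisted kernel with $\resto{\poincare}{\{l\}\times X}$ — which is again the uniqueness statement above, so you cannot entirely avoid it.
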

In this theorem, the Chern class of an affine line bundle is by definition the Chern class of its linearisation. 

This theorem shows that the set $\Picaff(X)_{x_0}^c$ is naturally
a union of vector spaces  parameterised  by $\Pic^c(X)$,
so one might think that it should come with a natural linear space  structure \cite[p. 49]{fischer}, and that this linear space classifies families of linearly framed affine line bundles (of fixed Chern class $c$) on $X$.
{\it We will prove that this is not always the case,
and we will give a necessary and sufficient condition for the existence of such a structure. }
%

The condition ``classifies families of linearly framed affine line bundles on $X$" mentioned above can be formalised as follows: We define the functors 
$$\Picaff_{X,x_0}:\cat{An}^{op} \ra
\cat{Set}\,,\ \Picaff_{X,x_0}^c:\cat{An}^{op} \ra
\cat{Set}$$
in the following way: for a complex space $T$ let $\Affshf(1)_{X_T,x_0}$
be the sheaf of germs of $\gaff$-valued morphisms $f:X_T\supseteq U \ra \gaff$
such that $\resto{\lambda \circ f}{U\cap (T\times \{x_0\})}\equiv 1$. 

Fixing a Chern class $c\in \NS(X)$ will yield a subsheaf (see section \ref{sec:relativecase} for details) 
$$R^1_c{f_T}_*\big(\Affshf(1)_{X_T,x_0}\big)\subset R^1{f_T}_*
(\Affshf(1)_{X_T,x_0} )\,.$$
 The functors  $\Picaff_{X,x_0}$, $\Picaff_{X,x_0}^c$ are defined by
$$T \mapsto \coh{0}{T}{R^1{f_T}_* (\Affshf(1)_{X_T,x_0} )},\  T \mapsto \coh{0}{T}{R^1_c{f_T}_* (\Affshf(1)_{X_T,x_0} )}$$
(see \cite[Chapitre V.§3]{giraud}  for the definition  of the functor $R^1$
for sheaves on non-Abelian groups).

The identification (\ref{eq:master})  suggests us to consider the  functor 
$$B^c_{X,x_0}:(\cat{An}/\Pic^c(X))^{op} \ra \cat{Set}$$
%
$$B^c_{X,x_0}(T\xra{g} \Pic^c(X))=\coh{0}{T}{R^1 {f_T}_*(g\times \id)^*\poincare^c}\,,$$
where $f$ is the projection $\Pic^c(X)\times X \ra \Pic^c(X)$.
Associating to $B^c_{X,x_0}$ the functor ${\widetilde B^c_{X,x_0}}$ as in Appendix \ref{functorstuff} 
 we formulate our main comparison theorem as follows:
 \begin{theorem}
  \label{thm:main} 
  The functor $\Picaff_{X,x_0}^c$ is representable by a complex space $V$ if and
  only if $\tilde{B^c_{X,x_0}}$ is represented by $V$. Moreover, if $V$ represents
  those functors it has a natural structure of a linear space over $\Pic^c(X)$.
 \end{theorem}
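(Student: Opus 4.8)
The plan is to construct a natural isomorphism of functors $\Picaff_{X,x_0}^c \iso \widetilde{B^c_{X,x_0}}$ on $\An^{op}$ and then conclude by the Yoneda lemma. First I would write down, on $X_T = T\times X$, the relative version of the exact sequence of (generally nonabelian) sheaves of groups
$$1 \ra \shf{X_T} \ra \Affshf(1)_{X_T,x_0} \xra{\lambda} \shf{X_T}^* \ra 1,$$
in which the translation part $\shf{X_T}$ is acted on by the linear part $\shf{X_T}^*$ by multiplication, and where the framing $\resto{\lambda\circ f}{T\times\{x_0\}}\equiv 1$ is what cuts out $\Affshf(1)_{X_T,x_0}$. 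Applying $R^1{f_T}_*$ (in the sense of Giraud \cite{giraud}) gives a map $\lambda_* : R^1{f_T}_*\Affshf(1)_{X_T,x_0} \ra R^1{f_T}_*\shf{X_T}^*$ whose target has global sections $\Pic(X_T/T)=\Pic_X(T)$. By Grothendieck's representability theorem \cite{grothendieck1} the Chern-class-$c$ part of $\Pic_X(T)$ is exactly $\Hom(T,\Pic^c(X))$, so restricting to it yields a natural map
$$\Picaff_{X,x_0}^c(T)=\coh{0}{T}{R^1_c{f_T}_*\Affshf(1)_{X_T,x_0}} \ra \Hom(T,\Pic^c(X)),$$
which I claim is the forgetful transformation $(g,s)\mapsto g$ of the functor $\widetilde{B^c_{X,x_0}}$ constructed in Appendix \ref{functorstuff}.

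The second and main step is to identify the fibre of this map over a fixed $g:T\ra\Pic^c(X)$ with $B^c_{X,x_0}(T\xra{g}\Pic^c(X))=\coh{0}{T}{R^1{f_T}_*(g\times\id)^*\poincare^c}$. This is the relative, fibrewise form of the identification (\ref{eq:master}), carried out by the twisting construction for nonabelian cohomology: affine line bundles with a prescribed linearization form a torsor under the cohomology of the translation sheaf twisted by that linearization, i.e.\ under the sheaf of sections of the linearization, so after applying $R^1{f_T}_*$ they form a torsor under its relative $H^1$. Here the two normalizations at $x_0$ cooperate: the framing $\lambda\equiv 1$ along $T\times\{x_0\}$ rigidifies the $\cstar$-action (recall $\coh{0}{X}{\shf{X}}=\C$, whence $\coh{0}{X}{\shf{X}^*}=\cstar$) that in the unframed case forced the quotient by $\cstar$, while the normalization of $\poincare$ at $x_0$ singles out, for the family classified by $g$, the representative $(g\times\id)^*\poincare^c$ of the linearization with no twist pulled back from $T$. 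Together they remove the $\Pic(T)$-indeterminacy and endow the fibre with the canonical base point given by the split affine bundle, so that the torsor becomes the vector space $B^c_{X,x_0}(T,g)$. Verifying compatibility with pullback along morphisms $T'\ra T$ then upgrades these fibrewise bijections to the required natural isomorphism $\Picaff_{X,x_0}^c\iso\widetilde{B^c_{X,x_0}}$.

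Granting this, the first assertion is immediate from the Yoneda lemma, since naturally isomorphic functors $\An^{op}\ra\Set$ have the same representing object; thus $\Picaff_{X,x_0}^c$ is representable by $V$ if and only if $\widetilde{B^c_{X,x_0}}$ is. For the linear structure, note that $R^1{f_T}_*(g\times\id)^*\poincare^c$ is the relative first cohomology of a line bundle, so $B^c_{X,x_0}$ takes values in $\C$-vector spaces with $\C$-linear transition maps; hence $\widetilde{B^c_{X,x_0}}$ is a linear functor over $\Pic^c(X)$ in the sense of Fischer \cite[p.~49]{fischer}. When it is representable, the forgetful transformation of the first paragraph provides a structure morphism $V\ra\Pic^c(X)$, and the fibrewise vector space structure of $B^c_{X,x_0}$ makes $V$ a linear space over $\Pic^c(X)$, with underlying set $\coprod_{l\in\Pic^c(X)}\coh{1}{X}{\resto{\poincare}{\{l\}\times X}}$ as in (\ref{eq:master}).

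The step I expect to be the main obstacle is the fibrewise twisting identification of the second paragraph: making the nonabelian $R^1{f_T}_*$ and its torsor interpretation precise over an arbitrary analytic base $T$, and in particular checking that the framing at $x_0$ genuinely trivializes the $\Pic(T)$-ambiguity, so that the fibre acquires a canonical zero and becomes the vector space $B^c_{X,x_0}(T,g)$ rather than merely a torsor under it. The subtlety is that cohomology need not commute with base change — precisely the phenomenon that governs representability in the main theorem through the constancy of $h^0$ — but for the present comparison statement one only needs the identification of the two functors, with all representability questions thereby transferred to the single functor $\widetilde{B^c_{X,x_0}}$.
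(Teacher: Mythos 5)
Your overall strategy is sound and lands on the same central identification as the paper, but you package it differently. The paper does not invoke the nonabelian cohomology exact sequence or torsor twisting; instead it runs the chain of groupoid equivalences $\cat{A}_{X,x_0}(T)\simeq\cat{E}_{X,x_0}(T)\simeq\cat{L}_{X,x_0}(T)$ (framed affine bundles $\to$ rank-$2$ extensions of ${\cal O}_{X_T}$ $\to$ triples $({\cal L},h,\tau)$ with $h\in H^1(T\times X,{\cal L})$), and then Remark \ref{RemRel2} performs exactly the rigidification you describe: since the set of isomorphisms ${\cal L}\xra{\simeq}(\varphi\times\id_X)^*\poincare$ is an $H^0(T,{\cal O}_T^*)$-torsor, the two normalizations at $x_0$ single out a unique one and hence a well-defined class $\sigma$, killing the $\cstar$- and $\Pic(T)$-ambiguities. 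So your twisting step and the paper's extension-class step are two descriptions of the same bijection; yours is more abstract, the paper's is more concrete and asks nothing of Giraud's $R^1{f_T}_*$ for nonabelian sheaves beyond its definition as a sheafification.

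The one place where your proposal has an actual hole --- which you honestly flag --- is the passage from the identification at the level of $H^1(X_T,\cdot)$ to the identification of the functors, which are defined via $H^0(T,R^1{f_T}_*(\cdot))$. This is not automatic: cohomology over $X_T$ and global sections of the direct image do not agree in general, and this is precisely the base-change phenomenon you mention. The paper resolves it by sheafifying: it applies the identification (\ref{ident}) to all open $U\subseteq T$ to get a presheaf isomorphism, and then proves a Lemma that the sheafification of $U\mapsto\{(\varphi,\sigma)\mid\sigma\in H^1(X_U,(\varphi\times\id_X)^*\poincare)\}$ is the sheaf $U\mapsto\{(\varphi,\sigma)\mid\sigma\in H^0(U,R^1f_{U*}(\varphi\times\id_X)^*\poincare)\}$, using that on Stein $U$ the Leray spectral sequence together with Cartan's Theorem B makes the comparison map bijective. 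Your write-up needs this (or an equivalent) to be complete. Two smaller points: your sequence $1\to{\cal O}_{X_T}\to\Affshf(1)_{X_T,x_0}\xra{\lambda}{\cal O}^*_{X_T}\to 1$ is not right-exact as written --- the image of $\lambda$ is the framed subsheaf ${\cal O}^*_{X_T,x_0}$, and one needs $H^0({\cal O}_X)=\C$ (Remark \ref{Pic2functorsnew}) to identify the resulting functor with $\Pic_X$; and your Yoneda argument for the ``if and only if'' is fine and is what the paper does (Proposition \ref{app:rep} is only needed to pass further from $\widetilde{B^c_{X,x_0}}$ to the relative functor $B^c_{X,x_0}$). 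The derivation of the linear-space structure from the fibrewise vector-space values matches the paper's intent.
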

Clearly $B^c_{X,x_0}$ is a much more manageable functor, because $\poincare$ is a locally free sheaf
and $f$ is just a projection.  


Functors of this type have been studied before.
Proposition  \cite[Satz 8.1]{bingener1} gives a sufficient representability
condition. We will prove that, in our case, this condition is also necessary.  This will give us
the following
\begin{theorem}\label{thm:final}
The functor $\Picaff^c_{X,x_0}$ is representable  if
and only if the function $l \mapsto
  h^0(X_l,\resto{\poincare^c}{l})$ is constant on $\Pic^c(X)$. If this is the
  case, then it is represented by a linear fibre space over $\Pic^c(X)$. 
\end{theorem}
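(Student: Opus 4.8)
The plan is to derive Theorem~\ref{thm:final} from Theorem~\ref{thm:main} by studying the Bingener-type functor $B^c_{X,x_0}$. By Theorem~\ref{thm:main}, $\Picaff^c_{X,x_0}$ is representable by a complex space $V$ precisely when $\tilde B^c_{X,x_0}$ is represented by $V$, and $\tilde B^c_{X,x_0}$ is constructed from $B^c_{X,x_0}$ as in Appendix~\ref{functorstuff}. Since $\poincare^c$ is locally free and $f\colon\Pic^c(X)\times X\to\Pic^c(X)$ is a proper flat projection, we are exactly in the situation of Bingener's deformation theory of $1$-cohomology classes. It therefore suffices to show that $B^c_{X,x_0}$ is representable if and only if $l\mapsto h^0(X_l,\resto{\poincare^c}{l})$ is constant on $\Pic^c(X)$, and that in the representable case the representing object is a linear fibre space.

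For sufficiency I would argue as follows. If $h^0$ is constant, then $f$ is cohomologically flat in dimension $0$ for $\poincare^c$; equivalently, $f_*\poincare^c$ is locally free and its formation commutes with base change (Grauert's base-change theorem, with the general case handled through a local finite free complex computing $Rf_*\poincare^c$). This is precisely the hypothesis of Bingener's criterion \cite[Satz 8.1]{bingener1} in the case $p=1$, which then gives representability of $B^c_{X,x_0}$. Combined with Theorem~\ref{thm:main}, the representing space acquires a linear-space structure over $\Pic^c(X)$; concretely it is the linear fibre space attached to the coherent sheaf $R^1f_*\poincare^c$, whose fibre over $l$ is $\coh{1}{X_l}{\resto{\poincare^c}{l}}$, matching the set-theoretic description (\ref{eq:master}).

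The substance of the theorem is the converse, announced in the abstract as the converse of Bingener's criterion for $p=1$. Suppose $B^c_{X,x_0}$ is representable; by Theorem~\ref{thm:main} the representing object is a linear fibre space over $S:=\Pic^c(X)$, attached to some coherent sheaf $\mathcal{G}$, so that its functor of points is $(T\xra{g}S)\mapsto\Hom_{\shf{T}}(g^*\mathcal{G},\shf{T})$. This functor is obtained from $\mathcal{G}$ by pull-back and $\shf{T}$-linear duality alone, hence its formation is compatible with base change. Consequently the functor $(T\xra{g}S)\mapsto\coh{0}{T}{R^1{f_T}_*(g\times\id)^*\poincare^c}$ must be compatible with base change as well. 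To convert this into a statement about $h^0$, I represent $Rf_*\poincare^c$ locally on $S$ by a finite complex of free sheaves $K^0\xra{d^0}K^1\xra{d^1}K^2\to\cdots$, so that $h^0(s)=\dim_{k(s)}\ker\bigl(d^0\otimes k(s)\bigr)$ for every $s$, and the level-$1$ base-change map $\phi^1(s)$ is an isomorphism exactly when $d^0\otimes k(s)$ has locally constant rank near $s$. If $h^0$ were not constant, upper semicontinuity would furnish a point $s_0$ at which $h^0$ jumps up; there $d^0\otimes k(s_0)$ drops rank, $\ima(d^0)$ fails to be a subbundle, and the base-change compatibility just established breaks down. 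This contradiction forces $h^0$ to be constant.

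The main obstacle is precisely this last implication: turning the failure of base change at $s_0$ into a genuine failure of representability. The delicate point is that the naive fibres $\coh{1}{X_s}{\resto{\poincare^c}{s}}$ do not by themselves detect the obstruction, since both $h^1$ and the fibre dimension of a linear fibre space are upper semicontinuous; the obstruction instead lives in the values of the functor on non-reduced test objects. I therefore expect the crux to be an explicit comparison, over infinitesimal thickenings of $s_0$, between the values of $B^c_{X,x_0}$ --- governed by $R^1{f_T}_*$, which does not commute with base change once $d^0$ drops rank --- and the values dictated by $\Hom_{\shf{T}}(g^*\mathcal{G},\shf{T})$, which always do, exhibiting a discrepancy. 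This comparison is the technical heart of the argument and the point at which the converse to Bingener's criterion is actually established.
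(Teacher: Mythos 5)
Your reduction to the Bingener functor $B^c_{X,x_0}$ via Theorem~\ref{thm:main} and your sufficiency argument (constant $h^0$ $\Rightarrow$ cohomological flatness in dimension $0$ by Grauert's continuity theorem $\Rightarrow$ Bingener's Satz 8.1 applies) match the paper exactly; this is Corollary~\ref{thm:locallyconstant}. The problem is the converse, which you correctly identify as the substance of the theorem but do not actually prove: your last paragraph explicitly defers the decisive step (``I therefore expect the crux to be an explicit comparison, over infinitesimal thickenings\dots''). As it stands this is a statement of strategy, not a proof, and the strategy itself has an unjustified input: you assume that a representing object must be a linear fibre space $V(\mathcal{G})$ attached to a coherent sheaf, so that its functor of points is $\Hom_{\shf{T}}(g^*\mathcal{G},\shf{T})$ and hence ``compatible with base change.'' A priori representability only hands you \emph{some} complex space over $S$; promoting it to $V(\mathcal{G})$ and then extracting a base-change statement for $R^1f_{T*}$ from the functorial isomorphism on non-reduced test spaces is precisely the work that is missing.

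The paper's argument (Theorem~\ref{thm:proof-part-2}) closes this gap by a different and more elementary mechanism, and it is worth internalizing because it needs almost nothing from the representing space. From representability one extracts only a zero section $O_{\id}:S\to Y$ and pulls it back; if $T$ is locally irreducible and a section $\sigma$ of $R^1f_{T*}\mathcal{F}_T$ vanishes on an open set, then $\varphi(\sigma)$ agrees with $O_g$ on that open set, hence everywhere by the identity theorem. This gives Proposition~\ref{prop:torsion-free}: $R^1f_{T*}\mathcal{F}_T$ is torsion-free over locally irreducible $T$. Then, assuming $l\mapsto h^0$ jumps at some $z_0$ in an irreducible component, one chooses a disc $h:D\to S$ with $h(0)=z_0$ and $h(D\setminus\{0\})$ in the locus where $h^0$ is minimal, tensors $\mathcal{G}:=\mathcal{F}_D$ with $0\to{\cal I}_0\to\shf{D}\to\shf{\{0\}}\to 0$, and observes that the connecting map into $R^1f_{D*}(\mathcal{G}\otimes f_D^*{\cal I}_0)\cong R^1f_{D*}(\mathcal{G})$ composes to multiplication by the coordinate $\zeta$, which is injective by torsion-freeness. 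Hence $f_{D*}(\mathcal{G})_0\to f_{D*}(\mathcal{G}/\mathfrak{m}_0\mathcal{G})_0$ is surjective, i.e.\ $\mathcal{G}$ is cohomologically flat in dimension $0$ at $0$, so $h^0$ is constant on $D$ by Theorem~\ref{continuity} --- contradicting the jump. Note in particular that, contrary to your expectation, the obstruction is detected on a \emph{reduced} one-dimensional test space, not on infinitesimal thickenings; the torsion-freeness of $R^1$ is the single piece of information that representability contributes. Your proposal needs this (or an equivalent) lemma to become a proof.
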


\section[The affine Picard functor]{The affine Picard functor. The linearly framed affine Picard functor}
\label{sec:defc}
For the rest of the article $X$ denotes a compact complex space. For two complex
spaces $X,T$, $X_T=T\times X$ and $f_T:X_T\ra T$ is defined as the first projection.

\subsection{The linear case and Grothendieck's representability theorem}

First we will review a classic result and present a new way to look at it with
the method developed in the Appendix. This
will give a clear motivation for the following chapters.
Define the functor  $\Pic_X: \An^{op}\to \Set$ by
$$\Pic_X(T):=\Pic(X_T/T)=H^0(T, R^1f_{T*}({\cal O}_{X_T}^*))\,.$$
Recall that the set $H^1(X,{\cal O}_X^*)$ has a natural structure of an abelian
complex Lie group, which will be denoted $\Pic(X)$. The connected component of
the 0-element is the quotient  $H^1(X,{\cal O})/H^1(X,\Z)$, which is Hausdorff,
because $H^1(X,\Z)$ is closed in $H^1(X,{\cal O})$ \cite[Proposition
3.2]{grothendieck1}.

\begin{theorem}\emph{\cite[Théorème 3.3]{grothendieck1}}
 The functor $\Pic_X$ is represented by $\Pic(X)$.
\end{theorem}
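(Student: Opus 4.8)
The plan is to establish directly a natural isomorphism of functors $\Pic_X(-)\iso\Hom(-,\Pic(X))$; representability by $\Pic(X)$ is then immediate from the definition, and the Poincaré line bundle $\poincare$ emerges a posteriori as the universal element, i.e. the image of $\id_{\Pic(X)}$. I take for granted the complex Lie group structure on $\Pic(X)=\coh{1}{X}{\shf{X}^*}$ recalled above: writing $V:=\coh{1}{X}{\shf{X}}$ (finite dimensional, since $X$ is compact) and $\Lambda$ for the image of $\coh{1}{X}{\Z}$ in $V$ (a discrete, hence closed, subgroup by the cited closedness result), the identity component is the connected complex Lie group $\Pic^0(X)=V/\Lambda$, and $\Pic(X)$ is the disjoint union of its cosets, indexed by $\NS(X)\subseteq\coh{2}{X}{\Z}$.

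First I would analyse the sheaf $R^1{f_T}_*(\shf{X_T}^*)$ on an arbitrary base $T$ by pushing the exponential sequence $0\to\Z\to\shf{X_T}\to\shf{X_T}^*\to 0$ on $X_T=T\times X$ forward along the projection $f_T$. Using $\coh{0}{X}{\shf{X}}=\C$ one gets ${f_T}_*\shf{X_T}=\shf{T}$ and ${f_T}_*\shf{X_T}^*=\shf{T}^*$, and the long exact sequence then relates $R^1{f_T}_*\Z$, $R^1{f_T}_*\shf{X_T}$, $R^1{f_T}_*\shf{X_T}^*$, $R^2{f_T}_*\Z$ and $R^2{f_T}_*\shf{X_T}$ as sheaves on $T$.

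The technical core, and the step I expect to be the main obstacle, is to identify these direct images explicitly and \emph{universally} in $T$, allowing non-reduced bases. For the product projection I claim that (a) $R^i{f_T}_*\Z$ is the locally constant sheaf with stalk $\coh{i}{X}{\Z}$, and (b) $R^i{f_T}_*\shf{X_T}$ is the free sheaf $\coh{i}{X}{\shf{X}}\otimes_\C\shf{T}$. Statement (b) is where compactness of $X$ and the product structure are essential: evaluated on a Stein open $U\subseteq T$ it is the Künneth isomorphism $\coh{i}{U\times X}{\shf{}}\iso\coh{0}{U}{\shf{T}}\otimes_\C\coh{i}{X}{\shf{X}}$, which holds because $\coh{i}{X}{\shf{X}}$ is finite dimensional (so the completed tensor product is the algebraic one); this universal freeness, which fails for general families, is exactly what makes $\Pic_X$ representable with no flatness hypothesis. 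Granting (a) and (b), the pushed-forward exponential sequence collapses to a short exact sequence of sheaves on $T$,
\[ 0 \to Q \to R^1{f_T}_*\shf{X_T}^* \xra{c_1} \underline{\NS(X)} \to 0, \]
where $Q:=(V\otimes_\C\shf{T})/\underline{\Lambda}$ is precisely the sheaf of germs of holomorphic maps into $\Pic^0(X)=V/\Lambda$, and the Chern-class map lands in the constant sheaf $\underline{\NS(X)}$ because $\NS(X)=\ker\big(\coh{2}{X}{\Z}\to\coh{2}{X}{\shf{X}}\big)$.

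Finally I would apply $\coh{0}{T}{-}$ to this sequence and match it term by term with the corresponding description of $\Hom(T,\Pic(X))$: a morphism $T\to\Pic(X)$ is a locally constant function $T\to\NS(X)$ together with a section of the induced $\Pic^0(X)$-torsor, while $\coh{0}{T}{Q}=\Hom(T,\Pic^0(X))$ by the very definition of $Q$. The remaining points needing genuine care are the surjectivity of $\coh{0}{T}{R^1{f_T}_*\shf{X_T}^*}$ onto the locally constant $\NS(X)$-valued functions, handled by lifting a Chern class locally on $T$ to an honest line bundle, and the compatibility of the connecting maps with the torsor structure. Since every identification used is functorial in $T$ under pullback, these assemble into the natural isomorphism $\Pic_X\iso\Hom(-,\Pic(X))$, proving that $\Pic(X)$ represents $\Pic_X$.
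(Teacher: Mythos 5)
The paper offers no proof of this statement at all---it is quoted verbatim from Grothendieck \cite[Th\'eor\`eme 3.3]{grothendieck1}---so there is no internal argument to compare against; what you have written is essentially the classical exponential-sequence proof from that source, and its structure is sound. Your two computational inputs are correct and correctly justified: $R^i f_{T*}\underline{\Z}$ is the constant sheaf with stalk $\coh{i}{X}{\Z}$ (proper base change for the trivial fibration, plus local contractibility of $|T|$), and $R^i f_{T*}\shf{X_T}\iso \coh{i}{X}{\shf{X}}\otimes_\C\shf{T}$ via K\"unneth on Stein opens together with Leray ($H^1(U,\shf{U})=H^2(U,\shf{U})=0$ for $U$ Stein), the finite-dimensionality of $\coh{i}{X}{\shf{X}}$ turning the completed tensor product into the algebraic one; you are right that this universal freeness is exactly what makes products cohomologically flat with no extra hypothesis, and it is where $H^0(\shf{X})=\C$ enters (giving $f_{T*}\shf{X_T}=\shf{T}$, $f_{T*}\shf{X_T}^*=\shf{T}^*$ and hence the vanishing of the first connecting map).

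Two points need small repairs. First, your parenthetical ``discrete, hence closed, subgroup by the cited closedness result'' inverts the logic: \cite[Proposition 3.2]{grothendieck1} gives \emph{closedness} of $\coh{1}{X}{\Z}$ in $V=\coh{1}{X}{\shf{X}}$, while \emph{discreteness} is what you actually use (so that $V\to V/\Lambda$ is a covering and $Q$ is the sheaf of germs of morphisms into $\Pic^0(X)$); it follows in one line, since $\coh{1}{X}{\Z}$ is finitely generated, hence countable, and a countable closed subgroup of a finite-dimensional vector space is discrete by Baire. Note also that $\coh{1}{X}{\Z}\to V$ is injective ($H^1$ with $\Z$-coefficients is torsion-free by universal coefficients), so the constant sheaf really embeds as $\underline{\Lambda}$, as your short exact sequence for $Q$ tacitly assumes. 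Second, in the final matching you should make explicit that choosing, for each $c\in\NS(X)$, a line bundle $\lb_c$ on $X$ of class $c$ does double duty: pulling back $\lb_c$ over each open-closed piece where the Chern-class function equals $c$ proves the surjectivity onto $H^0(T,\underline{\NS(X)})$, and simultaneously trivializes both $H^0(T,Q)$-torsors (the fibre of $c_1$ over $\gamma$, and the set of morphisms $T\to\Pic(X)$ lifting $\gamma$), so the two sides match. The resulting isomorphism of functors depends on these choices but is natural in $T$ once they are fixed, which is all representability requires; the choice-dependence is precisely why the universal element---the Poincar\'e bundle---is only canonical after a normalization such as the framing at $x_0$ used in Section 2.1. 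With these two clarifications your argument is complete.
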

We refer to \cite[Kapitel II.6]{bingener1} for a generalization of this theorem to a
relative complex space $X\ra S$.
In the special case  $H^0({\cal O}_X)=\C$ we have an alternative interpretation of the functor $\Pic_X$:
\begin{remark}\label{Pic2functors}
Suppose $H^0({\cal O}_X)=\C$.  $\Pic(X)$ also represents the functor  
$$T\mapsto \Pic(X_T)/\Pic(T)\,. $$
\end{remark}

One obtains a new interpretation of the functor $\Pic_X$ using framed holomorphic line bundles. 
 Let $\Pic(T\times X, T\times \{x_0\})$ be the set of isomorphism classes of
 holomorphic line bundles on $T\times X$ framed along $T\times \{x_0\}$ . This
 set has a cohomological interpretation obtained as follows (see Appendix
 \ref{appendix} for details). Let ${\cal O}^*_{X_T,x_0}\subset {\cal O}^*_{X_T}$
 be the subsheaf defined by 
$${\cal O}^*_{X_T,x_0}:=\ker\big({\cal O}^*_{X_T}\to {\cal O}^*_{T\times\{x_0\}}\big)\,.
$$
Supposing $H^0({\cal O}_X)=\C$,  for  any complex space $T$  we have  canonical identifications
\begin{align*}
  \Pic(T\times X,T\times\{x_0\})&=H^1(T\times X,{\cal O}^*_{X_T,x_0})=H^0(T,R^1f_{T*}({\cal O}^*_{X_T,x_0}))\\
   &\simeq H^0(T,R^1f_{T*}({\cal O}^*_{X_T}))=\Pic_X(T)
\end{align*}

\begin{remark}\label{Pic2functorsnew}
Suppose $H^0({\cal O}_X)=\C$, and fix $x_0\in X$.  $\Pic(X)$ also represents the functor  
$$ T\mapsto \Pic(T\times X, T\times\{x_0\})=H^1(T\times X,{\cal O}^*_{X_T,x_0})=H^0(T,R^1f_{T*}({\cal O}^*_{X_T,x_0}))\,.$$
\end{remark}
A Poincaré line bundle $\poincare$ normalized at $x_0$ is a representative
of the element $\Pic(\Pic(X)\times X, \Pic(X)\times\{x_0\})$ which corresponds
to $\id_{\Pic(X)}$ under the isomorphism
$$\Pic(\Pic(X)\times X, \Pic(X)\times \{x_0\}) \ra \Hom(\Pic(X),\Pic(X))\,.$$
Such a Poincaré line bundle $\poincare$  defines an explicit isomorphism between functors 
$$T \mapsto \Hom(T, \Pic(X)),\ T\mapsto \Pic(T\times X, T\times\{x_0\})
$$
given  by
$$\varphi\mapsto (\varphi\times\id_X)^*(\poincare)\,.
$$

\subsection{The groupoid of affine line bundles and linearly framed  affine line bundles}
\label{sec:groupoids}

The set of isomorphism classes of holomorphic affine line bundles on $X$ is
denoted by $\Picaff(X)$. This pointed set (pointed via the trivial bundle) has a cohomological interpretation
given by the natural identification  \cite[I.Theorem 3.2.1]{hirzebruch} 

$$\Picaff(X)=\coh{1}{X}{\Affshf(1)_X}\,.$$

Let $A$ be an affine line bundle over $X$.
The frame bundle of $A$ is defined analogously to the frame bundle of a
line bundle.   More precisely, for  $x\in X$ put 
 $$F_x=\{p:\C\ra A_x|\ p \text{ is an affine linear isomorphism} \}\,,$$
and let $F(A)$ be the disjoint union $F(A)=\bigsqcup_{x\in X}F_x$. A point in
$F(A)$ is a pair $(x,p)$ where $x\in X$ and $p\in F_x$. One can use local
trivialisations of $A$ to define a complex manifold structure on $F(A)$ such
that, with respect to the obvious right $\gaff$-action, the projection  $F(A)\ra
X$ given by $(x,p)\mapsto x$ becomes a principal $\gaff$-bundle.
\begin{definition}
 The \emph{frame bundle} of $A$ is the map $F(A)\to X$ endowed with its natural
 principal $\gaff$-bundle structure over $X$.
\end{definition}
Denote by $\lambda:\gaff\ra \C^*$ be right hand morphism in the canonical short exact sequence
$$0\ra \C \ra \gaff \ra \C^* \ra 1\,. $$
\begin{definition}
  \label{def:linearisation}
  The \emph{linearisation} of an affine line bundle $A$ is    the
  associated bundle
  $$\lambda(A)= F(A)\times_\lambda \C\to X\,,$$
  where $\C^*=\GL(1,\C)$ acts on $\C$ in the natural way.
\end{definition}
Via linearisation, one obtains a map denoted by the same symbol,
$$\lambda:\Picaff(X)\ra \Pic(X)\,.$$
We have a group monomorphism $\varphi:\gaff \hookrightarrow \GL(2,\C)$ obtained by identifying $\gaff$
with the subgroup 
$$\left\{ \begin{pmatrix} a & b \\ 0 & 1 \end{pmatrix}  , a\in \C^*, b\in \C\right\}.$$
of $\GL(2,\C)$.

The groupoid $\cat{A}_X$ of affine line bundles on $X$ is defined as follows: ${\cal O}b(\cat{A}_X)$ is the class  of holomorphic affine
line bundles on $X$, and the morphisms in this category are affine line bundle isomorphisms over the identity. Furthermore we
define the groupoid $\cat{E}_X$ to be the category whose objects are bundle eimorphisms $ \mathcal{E}\textmap{p} {\cal O}_X$, where $\mathcal{E}$ is a locally free sheaf of rank 2. The morphisms  from $\mathcal{E}'\textmap{p'} {\cal O}_X$ to $\mathcal{E}''\textmap{p''} {\cal O}_X$ are  
isomorphisms  $\mathcal{E}'\textmap{u\simeq}\mathcal{E}''$ such that $p''\circ
u=p'$; i.e., such that the following diagram commutes.
\begin{equation*}
  \begin{tikzcd}
    \mathcal{E}'\ar[rr,"u"] \ar[rd,swap,"p'"] & & \mathcal{E}'' \ar[ld,"p''"] \\
      & \shf{X} &   
  \end{tikzcd}
\end{equation*}
We can think of $\cat{E}_X$ as the groupoid of vector bundle epimorphisms $E\ra X\times \C$ from a
rank 2 vector bundle $E$ to the trivial line bundle $X\times \C$ and we will use
this interpretation from now on.
For an affine line bundle $A$ let $F(A)\xra{q} X$ be the frame bundle defined above. 
We define a functor $\Phi:\cat{A}_X \ra \cat{E}_X$ by sending $A$ to $p:E\ra
X\times \C$ as follows:
\begin{itemize}
\item Define $E:=F(A)\times_\varphi \C^2$. 
\item The morphism $q\times \pr_2:F(A)\times \C^2 \ra X\times \C$ induces a vector bundle epimorphism
  $p:E\ra X\times \C$.
  
  The point here is that $\pr_2: \C^2\to\C$ is $\gaff$-invariant via $\varphi$.
\end{itemize}

We also have a functor $\Phi':\cat{E}_X\to \cat{A}_X$ which assigns to $p:E\ra X\times \C$ the pre-image of the section $1\in\Gamma(X\times \C)$.  
See \cite{plechinger} for the details of this construction and the proof of
the following theorem.
\begin{theorem}\label{Eq1}
  The pair   $(\Phi,\Phi')$ exhibits an equivalence of groupoids between $\cat{A}_X$ and $\cat{E}_X$.
\end{theorem}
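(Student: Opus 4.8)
The plan is to show that $\Phi$ and $\Phi'$ are mutually quasi-inverse by exhibiting natural isomorphisms $\Phi'\circ\Phi\cong\id_{\cat{A}_X}$ and $\Phi\circ\Phi'\cong\id_{\cat{E}_X}$; since both functors are given explicitly, this is more transparent than checking full faithfulness and essential surjectivity separately. Everything rests on one elementary observation about $\varphi$: the subgroup $\varphi(\gaff)\subset\GL(2,\C)$ preserves the affine line $\{(t,1)\mid t\in\C\}\subset\C^2$, and the induced action $\begin{pmatrix} a & b \\ 0 & 1\end{pmatrix}\cdot(t,1)=(at+b,1)$ is \emph{exactly} the standard affine action of $\gaff$ on $\C$. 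Thus the $\gaff$-equivariant map $\C\xra{\sim}\{(t,1)\}\hookrightarrow\C^2$ identifies the affine line (with its standard $\gaff$-action) with the fibre $\pr_2^{-1}(1)$.

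For the composite $\Phi'\circ\Phi$, I would unravel $\Phi(A)=\big(F(A)\times_\varphi\C^2\xra{p}X\times\C\big)$ and note that $p^{-1}(x,1)$ is the set of classes $[f,(t,1)]$. Using the equivariant identification above, the assignment $[f,(t,1)]\mapsto[f,t]\in F(A)\times_\gaff\C$, where $\gaff$ acts on the second factor by the standard affine action, is well defined; but $F(A)\times_\gaff\C$ is precisely the associated-bundle description of $A$ itself (via $[f,t]\mapsto f(t)\in A_x$). These fibrewise maps glue to an isomorphism $\Phi'\Phi(A)\xra{\sim}A$, and it is natural in $A$ because any affine isomorphism $\psi\colon A\to A'$ induces a principal $\gaff$-bundle isomorphism $F(A)\to F(A')$ compatible with the whole construction.

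The main point is the composite $\Phi\circ\Phi'$. Given $(E\xra{p}X\times\C)$, set $A:=\Phi'(E,p)=p^{-1}(X\times\{1\})$, an affine subbundle of $E$ modelled on the line subbundle $L:=\ker p=p^{-1}(X\times\{0\})$. The crux is a $\gaff$-equivariant identification of $F(A)$ with the bundle $\widetilde F$ of \emph{$\varphi$-adapted linear frames}, whose fibre over $x$ is
$$\widetilde F_x:=\{\tilde f\colon\C^2\xra{\sim}E_x\mid p_x\circ\tilde f=\pr_2\}\,.$$
To an affine isomorphism $f\colon\C\to A_x$ I would associate the linear map $\tilde f$ determined by $\tilde f(e_1)=f(1)-f(0)\in L_x$ and $\tilde f(e_2)=f(0)\in A_x$. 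One checks that $\tilde f$ is an isomorphism satisfying $p_x\circ\tilde f=\pr_2$, that its restriction to $\{(t,1)\}$ recovers $f$ (since $f(t)=f(0)+t(f(1)-f(0))$ for affine $f$), and—by a short matrix computation—that $\widetilde{f\cdot g}=\tilde f\circ\varphi(g)$, so $f\mapsto\tilde f$ is an equivariant bijection $F(A)\cong\widetilde F$. Then $[\tilde f,v]\mapsto\tilde f(v)$ defines an isomorphism $\Phi(A)=F(A)\times_\varphi\C^2\xra{\sim}E$ over $X$ commuting with the two epimorphisms, which is the required natural isomorphism in $\cat{E}_X$.

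The delicate step is this last one: the claim that a single affine frame of the one-dimensional affine fibre $A_x$ canonically determines a full linear frame of the rank-two fibre $E_x$ compatible with $p$. It hinges on the fact that $E_x$ is canonically reconstructed from the pair $(A_x,L_x)$ as ``direction space plus affine line'', i.e. on the observation that the extension $0\to L\to E\to X\times\C\to 0$ is rigidified by the distinguished section $1$. Once the equivariant identification $F(A)\cong\widetilde F$ is established, both naturality statements and the verification that $\Phi$ and $\Phi'$ respect morphisms (isomorphisms of affine bundles, resp. isomorphisms over the projections) reduce to routine diagram chases.
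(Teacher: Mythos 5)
Your proof is correct. Note that the paper does not actually contain a proof of Theorem~\ref{Eq1}: it defers the construction details and the proof to \cite{plechinger}, so no line-by-line comparison is possible; but your argument is precisely the direct verification one expects, exhibiting $\Phi'\circ\Phi\cong\id_{\cat{A}_X}$ and $\Phi\circ\Phi'\cong\id_{\cat{E}_X}$, and the two key computations check out: $[f,(t,1)]\mapsto f(t)$ is well defined on $F(A)\times_\varphi\C^2$ because $(f\circ g)\bigl(g^{-1}(t)\bigr)=f(t)$, and for $g(t)=at+b$ one has $\widetilde{f\circ g}(e_1)=a\bigl(f(1)-f(0)\bigr)=\tilde f\bigl(\varphi(g)e_1\bigr)$ and $\widetilde{f\circ g}(e_2)=f(b)=f(0)+b\bigl(f(1)-f(0)\bigr)=\tilde f\bigl(\varphi(g)e_2\bigr)$, confirming equivariance. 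Two small points you should make explicit to fully close the argument: first, surjectivity of $f\mapsto\tilde f$ onto the adapted-frame bundle $\widetilde F$ (given a linear $\tilde h:\C^2\xra{\sim}E_x$ with $p_x\circ\tilde h=\pr_2$, the map $t\mapsto\tilde h(t,1)$ lands in $A_x=p_x^{-1}(1)$ and is an affine frame whose associated linear frame is $\tilde h$, since $\tilde h(e_1)\in L_x\setminus\{0\}$ by injectivity); second, that all your fibrewise isomorphisms are holomorphic and not merely set-theoretic, which follows by expressing them in local trivialisations of $A$ resp.\ $E$ — you flag this as routine, and it is, but it is the step that upgrades the pointwise bijections to isomorphisms in the groupoids $\cat{A}_X$ and $\cat{E}_X$.
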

We introduce a third groupoid $\cat{L}_X$ whose objects are pairs $(\lb,h)$, where
$\lb$ is a locally free sheaf of rank 1 over $X$ and $h\in \coh{1}{X}{\lb}$. For two
pairs $(\lb,h),(\lb',h')$ a morphism is an isomorphism $f:\lb\ra \lb'$ such that
$f^*(h')=h$.
Next we define a functor $\Psi:\cat{E}_X\ra \cat{L}_X$ by mapping an epimorphism
$\mathcal{E}\xra{q} \shf{X}$ to $(\lb,h)$, where
\begin{itemize}
\item $\lb$ is the kernel of $q$,
\item $h$ is the element of $\coh{1}{X}{\lb}=\Ext^1(\shf{X},\lb)$ defined by the   extension
  $$
  \begin{tikzcd}
0 \rar & \lb \rar[hook] & \mathcal{E} \rar["q"] & \shf{X} \rar & 0\, 
  \end{tikzcd}
  $$
  of $\shf{X}$ by $\lb$.
\end{itemize}

\begin{theorem}\label{Eq2}
  The functor $\Psi$ exhibits an equivalence of groupoids between $\cat{E}_X$ and $\cat{L}_X$.
\end{theorem}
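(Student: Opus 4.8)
The plan is to prove that $\Psi$ is an equivalence by producing an explicit quasi-inverse and then reducing to a check on objects and on morphism sets. The quasi-inverse $\Theta:\cat{L}_X\to\cat{E}_X$ is dictated by the identification $\coh{1}{X}{\lb}=\Ext^1(\shf{X},\lb)$ already used to define $\Psi$: a class $h\in\coh{1}{X}{\lb}$ is exactly the isomorphism class of an extension $0\to\lb\to\mathcal{E}_h\xra{q_h}\shf{X}\to0$, so I set $\Theta(\lb,h):=q_h$. Since an extension of a locally free rank-one sheaf by a locally free rank-one sheaf splits locally (the quotient $\shf{X}$ is locally free, so the extension is locally trivial), $\mathcal{E}_h$ is automatically locally free of rank $2$ and $q_h$ is a legitimate object of $\cat{E}_X$. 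By construction $\Psi(\Theta(\lb,h))=(\lb,h)$, so $\Psi$ is surjective on objects and a fortiori essentially surjective; the content of the theorem therefore lies entirely in the claim that $\Psi$ is fully faithful.

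For the morphism level, first note that any $u:\mathcal{E}'\to\mathcal{E}''$ with $q''\circ u=q'$ carries $\ker q'$ into $\ker q''$, so it restricts to an isomorphism $f:=u|_{\lb'}:\lb'\to\lb''$ and fits into a morphism of short exact sequences inducing $\id_{\shf{X}}$ on the quotient. Naturality of the extension class under such a morphism yields $f_*(h')=h''$, which is precisely the condition for $f$ to be a morphism $(\lb',h')\to(\lb'',h'')$ in $\cat{L}_X$; this shows $\Psi$ is well defined on morphisms. Fullness is the converse and is formal: given an isomorphism $f$ with $f_*(h')=h''$, the obstruction to lifting $(f,\id_{\shf{X}})$ to a map of the middle terms is exactly $f_*(h')-h''\in\Ext^1(\shf{X},\lb'')$, which vanishes, so a lift $u$ exists, and $u$ is automatically an isomorphism because it is one on both the sub-object and the quotient.

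The step I expect to be the genuine obstacle is faithfulness. Two lifts $u_1,u_2$ of the same $f$ differ by a morphism $u_1-u_2$ that annihilates $\lb'$ and takes values in $\lb''$, hence factors through the quotient as $\shf{X}\xra{s}\lb''$ with $s\in\Hom(\shf{X},\lb'')=\coh{0}{X}{\lb''}$; conversely every such $s$ produces a new valid lift. Thus the fibre of $\Psi$ over a morphism $f$ is a torsor under $\coh{0}{X}{\lb''}$, and faithfulness is exactly the assertion that this torsor is trivial, i.e. that a morphism of extensions over $\shf{X}$ is pinned down by its restriction to the kernel. Since $\coh{0}{X}{\lb''}$ need not vanish, this is where the real content sits: making $\Psi$ faithful forces one to account for the translation part $\coh{0}{X}{\lb''}$, either by carrying it as extra data on the morphisms of $\cat{L}_X$ or by killing it through the normalisation and framing conventions that are the theme of this section. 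I would therefore organise the proof so that essential surjectivity and fullness are dispatched as the formal steps, and concentrate the argument on controlling this $\coh{0}{X}{\lb''}$-torsor.
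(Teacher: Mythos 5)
Your handling of essential surjectivity and fullness is correct and is exactly the extension-theoretic argument the paper has in mind; the paper itself gives no proof beyond the remark that ``it is easy to check (using the theory of extensions) that $\Psi$ is fully faithful and essentially surjective'' (and it notes, consistently with your construction, that there is no obvious quasi-inverse functor --- your $\Theta$ is defined only on objects, which is all that essential surjectivity requires). The substantive issue is your final paragraph, and there you should not hedge: the $\coh{0}{X}{\lb''}$-torsor you identify is genuinely nontrivial, so faithfulness actually \emph{fails}, and the theorem as literally stated is false rather than merely delicate. Concretely, for the split object $\shf{X}\oplus\shf{X}\xra{\pr_2}\shf{X}$ the automorphism group in $\cat{E}_X$ is $H^0(X,\shf{X}^*)\ltimes H^0(X,\shf{X})$ (equal to $\gaff$ when $H^0(\shf{X})=\C$), whereas $\Aut_{\cat{L}_X}(\shf{X},0)=H^0(X,\shf{X}^*)$; since $1\in H^0(X,\shf{X})$ is always a nonzero section, $\Psi$ is never faithful for $X\neq\emptyset$. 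There is therefore no way to ``control the torsor'' without altering one of the two categories, for instance by enlarging the morphisms of $\cat{L}_X$ to carry the translation datum in $\Hom(\shf{X},\lb'')$; note that the framed category $\cat{L}_{x_0}$ as defined has the same defect, since framing only rigidifies the $\C^*$-part.

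What survives, and what the paper actually uses downstream (Theorem \ref{thm:SetTh} and its framed and relative versions), is only the induced bijection on sets of isomorphism classes. For a functor between groupoids this already follows from fullness together with essential surjectivity, both of which your argument establishes: if $\Psi(\mc{E}',p')\cong\Psi(\mc{E}'',p'')$ then fullness lifts the isomorphism, so the objects are isomorphic in $\cat{E}_X$. So your proposal proves everything the paper needs, and in the process correctly diagnoses that the faithfulness half of the stated theorem --- asserted without proof in the paper --- cannot be repaired as stated; an honest write-up should weaken the conclusion to ``$\Psi$ is full, essentially surjective, and induces a bijection on isomorphism classes.''
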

Note that in this case we do not have an obvious functor $\Psi':\cat{L}_X\to
\cat{E}_X$ such that $(\Psi,\Psi')$ is an equivalence of categories. On the
other hand it is easy to check (using the theory of extensions) that $\Psi$ is
fully faithful and essentially surjective.

Theorems \ref{Eq1} and \ref{Eq2} show that the three groupoids introduced above
have the same isomorphism classes of objects. \\ 

Let $l\in\Pic(X)$ be an isomorphism class of line bundles on $X$. We define
$\mathfrak{H}^1(l)$ to be set of isomorphism classes of pairs $({\cal L},h)$ as
above with ${\cal L}\in l$. Note that two pairs $({\cal L},h)$, $({\cal L},h')$
are isomorphic if and only if there exists $\zeta\in\C^*$ such that $h'=\zeta
h$. This shows that, for any representative ${\cal L}\in l$, we have a {\it
  canonical identification} $\mathfrak{H}^1(l)\simeq H^1(X,{\cal L})/\C^*$.

With this notation the set isomorphism classes of objects of  the groupoid
$\cat{L}_X$ decomposes as the disjoint union  $\bigsqcup_{l\in
  \Pic(X)}\mathfrak{H}^1(l)$. Combining with Theorems \ref{Eq1}, \ref{Eq2} we
obtain our first classification theorem which gives a set theoretical
description of $\Picaff(X)$:  
\begin{theorem}\label{thm:SetTh}
  The set of isomorphism classes of holomorphic affine line bundles on $X$ is
  given by the disjoint union
  $$ \Picaff(X)=\bigsqcup_{l\in \Pic(X)} \mathfrak{H}^1(l)\,.$$
\end{theorem}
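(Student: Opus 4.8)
The plan is to deduce the decomposition directly from the two groupoid equivalences of Theorems~\ref{Eq1} and~\ref{Eq2}, together with the bookkeeping of how the underlying line bundle behaves under morphisms in $\cat{L}_X$. The guiding principle is elementary but decisive: an equivalence of groupoids (indeed of arbitrary categories) carries isomorphic objects to isomorphic objects and is essentially surjective, hence induces a \emph{bijection} between the sets of isomorphism classes of objects on either side.

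First I would apply this principle twice. Theorem~\ref{Eq1} yields a bijection between the isomorphism classes of $\cat{A}_X$ --- which is $\Picaff(X)$ by the very definition of this set --- and the isomorphism classes of $\cat{E}_X$. Theorem~\ref{Eq2} then yields a bijection between the isomorphism classes of $\cat{E}_X$ and those of $\cat{L}_X$. Composing, I obtain a canonical bijection $\Picaff(X)\xra{\sim}\mathrm{Ob}(\cat{L}_X)/{\iso}$, so it remains only to identify the right-hand set with the disjoint union $\bigsqcup_{l\in\Pic(X)}\mathfrak{H}^1(l)$.

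Next I would organise the isomorphism classes of $\cat{L}_X$ according to their underlying line bundle. Sending a pair $(\lb,h)$ to the class $[\lb]\in\Pic(X)$ defines a map $\pi:\mathrm{Ob}(\cat{L}_X)/{\iso}\to\Pic(X)$; this is well defined precisely because a morphism $(\lb,h)\to(\lb',h')$ in $\cat{L}_X$ is, by definition, built from an isomorphism $f:\lb\xra{\sim}\lb'$, so isomorphic objects of $\cat{L}_X$ have isomorphic underlying line bundles. By the definition of $\mathfrak{H}^1(l)$ as the set of isomorphism classes of pairs $(\lb,h)$ with $\lb\in l$, the fibre $\pi^{-1}(l)$ is exactly $\mathfrak{H}^1(l)$. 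Hence $\mathrm{Ob}(\cat{L}_X)/{\iso}=\bigsqcup_{l\in\Pic(X)}\mathfrak{H}^1(l)$, and transporting along the bijection above gives the asserted description of $\Picaff(X)$.

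There is essentially no obstacle in the disjoint-union statement itself once the two equivalences are in hand: the single point to verify is the well-definedness of $\pi$, which is immediate from the shape of morphisms in $\cat{L}_X$. The genuinely delicate point --- the reason the preceding discussion spells it out, and what makes the fibres interesting --- is the finer identification $\mathfrak{H}^1(l)\iso\coh{1}{X}{\lb}/\cstar$ for a chosen representative $\lb\in l$. This requires that the only line-bundle automorphisms of $\lb$ be the scalars, i.e. $H^0(X,\shf{X}^*)=\cstar$, which rests on the standing hypothesis $H^0(\shf{X})=\C$; the factor $\cstar$ is then exactly the action of these scalar automorphisms on $\coh{1}{X}{\lb}$ via $h\mapsto\zeta h$. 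For the set-theoretic statement of Theorem~\ref{thm:SetTh} this refinement is not strictly needed, since $\mathfrak{H}^1(l)$ is defined intrinsically; it becomes essential only when one wants the explicit cohomological model of each fibre used in the later representability analysis.
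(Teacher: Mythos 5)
Your proof is correct and follows essentially the same route as the paper: it combines the groupoid equivalences of Theorems~\ref{Eq1} and~\ref{Eq2} with the observation that the isomorphism classes of $\cat{L}_X$ fibre over $\Pic(X)$ via the underlying line bundle, the fibre over $l$ being $\mathfrak{H}^1(l)$ by definition. Your closing remark correctly isolates the only point where the hypothesis $H^0(\shf{X})=\C$ enters, namely the identification $\mathfrak{H}^1(l)\simeq \coh{1}{X}{\lb}/\cstar$, which the paper also records but which is not needed for the disjoint-union statement itself.
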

Therefore, for any line bundle ${\cal L}$ on $X$ the set $\Picaff(X)$ contains the  quotient 
$$\mathfrak{H}^1([{\cal L}])=H^1(X,{\cal L})/\C^*$$
 (which is non-Hausdorff when $H^1(X,{\cal L})\ne \{0\}$). This shows that, in
 general one cannot hope to have an analytic space parameterizing the
 isomorphism classes of affine line bundles.  On the other hand we will see that
 the classification problem for {\it linearly framed affine line bundles}, introduced
 below, is much more manageable. 

Fix a point $x_0\in X$. A linear $x_0$-frame (or, simply a frame, when $x_0$ is fixed)
for an affine line bundle $A$ is a linear isomorphism $\C\xra{\tau} \lambda(A)_{x_0}$.
In the language of Appendix \ref{appendix}, this is an $(x_0,\lambda)$-frame,
where $\lambda:\gaff \ra \C^*$ is the linearisation map used in \ref{def:linearisation}.
\begin{definition}
A linearly $x_0$-framed affine line bundle on $X$ is a pair $(A,\tau)$ consisting of a
holomorphic  affine 	line bundle on $X$ and a linear $x_0$-frame of $A$. 
\end{definition}
An isomorphism $g:(A',\tau')\ra (A'',\tau'')$ is an
isomorphism $ {g}:A'\ra A''$ such that $\lambda( {g})_{x_0}\circ \tau'=\tau''$.
\begin{definition}
The set of isomorphism classes of linearly framed affine line bundles is denoted by $\Picaff(X)_{x_0}$.
\end{definition}
Similarly to the non-framed framework we have a cohomological interpretation of
this set. Let $\Affshf_{X,x_0}$ be the sheaf of (locally
defined) functions $X\supset U\xra{f}\Aff(1,\C)$ such that $\lambda\circ f (x_0)= 1$ if
$x_0\in U$. We then get the identification:
$$\Picaff(X)_{x_0}=\coh{1}{X}{\Affshf(1)_{X,x_0}}\,.$$
We introduce analogues of the three groupoids above in
the linearly framed framework. Let $\cat{A}_{x_0}$ be the groupoid whose objects are linearly
$x_0$-framed affine line bundles on $X$, i.e. 
pairs $(A,\tau)$ consisting of an affine line bundle $A$ on $X$, and a frame
$\C\xra{\tau} \lambda(A)_{x_0}$ (see section \ref{appendix}).

Correspondingly, the objects of the groupoid $\cat{E}_{x_0}$ are pairs $({\cal
  E}\textmap{p}{\cal O}_X,\tau)$, where ${\cal E}\textmap{p}{\cal O}_X$ is an
object of $\cat{E}_X$, and $\C\xra{\tau} \ker(p)(x_0)$ is a linear
isomorphism. A morphism  
 $$({\cal E}\textmap{p}{\cal O}_X,\tau)\to ({\cal E}'\textmap{p'}{\cal O}_X,\tau')$$
   in $\cat{E}_{x_0}$ is a   morphism $f:({\cal E},p)\to ({\cal E}',p')$ in
   $\cat{E}_X$ such that  $\tau'=f_{x_0}\circ \tau$. 

Finally we define $\cat{L}_{x_0}$ to be the groupoid whose objects are triples
$(\lb,h,\tau)$ such that $(\lb,h)$ is an object in $\cat{L}_X$, and $\tau:\C\ra
\lb_{x_0}$ is a linear isomorphism.

A morphism $f:(\lb',h',\tau')\ra
(\lb'',h'',\tau'')$ in $\cat{L}_{x_0}$ is a morphism 
$$ {f}:(\lb',h')\ra (\lb'',h'')$$
 in $\cat{L}_X$ such that $\tau''= {f}_{x_0}\circ \tau'$.

One easily checks that as above we get functors (denoted by the same symbols)
$\Phi_{x_0}:\cat{A}_{x_0}\ra \cat{E}_{x_0}$ and 
$\Psi_{x_0}:\cat{E}_{x_0}\ra \cat{L}_{x_0}$ as well as analogues of the theorems
Theorem \ref{Eq1}, Theorem \ref{Eq2} above. 
\begin{theorem}\label{groupoids}
The functors $\Phi_{x_0}:\cat{A}_{x_0}\ra \cat{E}_{x_0}$ and $\Psi_{x_0}:\cat{E}_{x_0}\ra \cat{L}_{x_0}$ are equivalences of groupoids.
\end{theorem}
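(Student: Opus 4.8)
The plan is to bootstrap the two framed equivalences from their unframed counterparts, Theorems \ref{Eq1} and \ref{Eq2}, the point being that the frame datum is canonically transported by $\Phi$ and $\Psi$. I would first make explicit, for each functor, the identification of the line bundle that carries the frame. For $\Psi$ this is tautological: by definition $\Psi$ sends $\mathcal{E}\xra{q}\shf{X}$ to $(\lb,h)$ with $\lb=\ker(q)$, so $\lb_{x_0}=\ker(q)(x_0)$ and a linear isomorphism $\C\xra{\tau}\ker(q)(x_0)$ \emph{is} a frame $\C\xra{\tau}\lb_{x_0}$. For $\Phi$ the key point is a canonical isomorphism $\lambda(A)\iso\ker(p)$, where $\Phi(A)=(p\colon E\ra X\times\C)$. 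This follows from the construction $E=F(A)\times_\varphi\C^2$: the subbundle $\ker(p)$ is $F(A)\times_\varphi(\C\times\{0\})$, and since $\gaff$ acts on $\C\times\{0\}\subset\C^2$ through $\varphi$ by the character $\lambda$, one gets $\ker(p)=F(A)\times_\lambda\C=\lambda(A)$. This isomorphism is natural in $A$, and in particular induces a canonical identification $\lambda(A)_{x_0}\iso\ker(p)(x_0)$ of fibres.

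Using these identifications I would define $\Phi_{x_0}$ and $\Psi_{x_0}$ on frames by transport across the canonical isomorphism; by construction they lift $\Phi,\Psi$ and commute with the forgetful functors to $\cat{A}_X,\cat{E}_X,\cat{L}_X$. It then remains to check the three conditions for an equivalence. For full faithfulness, a morphism $(A',\tau')\ra(A'',\tau'')$ in $\cat{A}_{x_0}$ is a morphism $g\colon A'\ra A''$ of $\cat{A}_X$ subject to the single extra condition $\lambda(g)_{x_0}\circ\tau'=\tau''$. By the naturality of $\lambda(A)\iso\ker(p)$, the fibre map $\lambda(g)_{x_0}$ is identified with the map $\Phi(g)_{x_0}$ induced on kernels at $x_0$, which is exactly the map occurring in the defining condition of an $\cat{E}_{x_0}$-morphism. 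Hence $\Phi$ carries the frame condition to the frame condition, so it restricts to a bijection between $\cat{A}_{x_0}$- and $\cat{E}_{x_0}$-morphisms with prescribed source and target; together with the full faithfulness of $\Phi$ this yields full faithfulness of $\Phi_{x_0}$. The same argument applies to $\Psi_{x_0}$, using that an $\cat{E}_X$-morphism acts on $\lb=\ker(q)$ by restriction.

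For essential surjectivity, given $(\mathcal{E}\xra{p}\shf{X},\sigma)$ in $\cat{E}_{x_0}$ I would use essential surjectivity of $\Phi$ to produce an $A$ together with an isomorphism $u\colon\Phi(A)\ra(\mathcal{E},p)$ in $\cat{E}_X$; pulling $\sigma$ back through $u_{x_0}$ on kernels and then through the canonical isomorphism yields a frame $\tau$ with $\Phi_{x_0}(A,\tau)\iso(\mathcal{E},p,\sigma)$, and the same recipe works for $\Psi_{x_0}$. The only non-formal step, and the one I expect to be the main obstacle, is establishing the naturality of $\lambda(A)\iso\ker(p)$ — precisely, verifying that under $\Phi$ the linearised fibre map $\lambda(g)_{x_0}$ of Definition \ref{def:linearisation} coincides with the kernel map $\Phi(g)_{x_0}$. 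This is where the associated-bundle description of $\Phi$ via $\varphi$ must be matched against the definition of the linearisation; once this single compatibility is pinned down, everything else is the formal transport of the $\C^*$-torsor of frames along an equivalence.
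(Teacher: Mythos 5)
Your argument is correct and matches what the paper intends: the paper gives no written proof of Theorem \ref{groupoids}, merely asserting it as the framed analogue of Theorems \ref{Eq1} and \ref{Eq2}, and your reduction --- transporting frames along the canonical natural isomorphism $\lambda(A)=F(A)\times_\lambda\C\iso\ker(p)$ and checking that $\Phi$ and $\Psi$ carry the frame condition to the frame condition --- is precisely the ``one easily checks'' step the paper leaves implicit. The single compatibility you flag as non-formal does hold, since $\varphi$ restricted to the invariant subspace $\C\times\{0\}\subset\C^2$ acts through the character $\lambda$, so $\lambda(g)_{x_0}$ is identified with $\Phi(g)_{x_0}$ on kernels.
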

Let $\poincare \ra \Pic(X)\times X$ be a Poincaré line
bundle normalised at $x_0$. For a point $l\in\Pic(X)$ we denote by
$\resto{\poincare}{l}$ the line bundle on $X$ given by the restriction of
$\poincare$ on $\{l\}\times X$. 

By definition $\poincare$ comes with a trivialisation on
$\Pic(X)\times\{x_0\}$, so with a holomorphic family of linear isomorphism  
$$\tau_l:\C\to \resto{\poincare}{l}(x_0),\ l\in\Pic(X)\,.$$
For any triple $(\lb,h,\tau)\in{\cal O}b(\cat{L}_{x_0})$ we get {\it a well defined} isomorphism 
$$\lb\xra{f_\tau}\resto{\poincare}{[{\lb}]}$$
satisfying the condition $f_\tau(x_0)\circ \tau=\tau_{[{\lb}]}$. Therefore we
get a well defined cohomology class
$f_\tau(h)\in H^1(X,\resto{\poincare}{[{\cal L}]})$.
For two isomorphic triples
$({\cal L},h,\tau)\simeq ({\cal L}',h',\tau')$ one has
$f_\tau(h)=f_{\tau'}(h')$. Moreover, for any fixed 
isomorphism class $l\in\Pic(X)$, the map 
$${\cal O}b(\cat{L}_{x_0})\ni ({\lb},h,\tau)\mapsto f_\tau(h)\in H^1(X,\resto{\poincare}{l})\hbox{ (where $\lb$ is a representative of $l$)}$$
 identifies the set of isomorphism classes of objects $({\cal L},h,\tau)$ with
 $[{\cal L}]=l$ with $H^1(X,\mathfrak{L}_l)$. This gives the following
 description of the set of isomorphism classes $\Picaff(X)_{x_0}$. 
\begin{theorem}
\label{thm:picaff}
 Let  $\poincare\ra \Pic(X)\times X$ be a Poincaré line bundle normalised at $x_0$. One has a natural identification 
   $$\Picaff(X)_{x_0}=\bigsqcup_{l\in \Pic(X)}\coh{1}{X}{\resto{\mathfrak{L}_{x_0}}{\{l\}\times X}}\,.$$
\end{theorem}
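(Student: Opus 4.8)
The plan is to reduce Theorem \ref{thm:picaff} to the groupoid equivalences already established, namely Theorem \ref{groupoids}, together with the construction of the Poincaré line bundle $\poincare$ normalised at $x_0$. The chain of equivalences $\cat{A}_{x_0}\xra{\Phi_{x_0}}\cat{E}_{x_0}\xra{\Psi_{x_0}}\cat{L}_{x_0}$ shows that the set $\Picaff(X)_{x_0}$ of isomorphism classes of linearly $x_0$-framed affine line bundles is in canonical bijection with the set of isomorphism classes of objects of $\cat{L}_{x_0}$, i.e.\ with isomorphism classes of triples $(\lb,h,\tau)$ where $\lb$ is a line bundle on $X$, $h\in H^1(X,\lb)$, and $\tau:\C\to\lb_{x_0}$ is a linear isomorphism. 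So the core of the proof is to identify this latter set with the stated disjoint union.

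First I would stratify $\mathcal{O}b(\cat{L}_{x_0})$ by the isomorphism class $l=[\lb]\in\Pic(X)$ of the underlying line bundle; since any morphism in $\cat{L}_{x_0}$ induces an isomorphism of underlying line bundles, this stratification descends to the set of isomorphism classes, yielding a disjoint union indexed by $l\in\Pic(X)$. It then remains to show that, for each fixed $l$, the isomorphism classes of triples $(\lb,h,\tau)$ with $[\lb]=l$ are in canonical bijection with $H^1(X,\resto{\poincare}{\{l\}\times X})$. This is exactly where the normalisation of the Poincaré bundle enters: for each such triple the trivialisation $\tau_l:\C\to\resto{\poincare}{l}(x_0)$ coming with $\poincare$ determines a \emph{unique} isomorphism $f_\tau:\lb\xra{\sim}\resto{\poincare}{l}$ compatible with the frames, i.e.\ satisfying $f_\tau(x_0)\circ\tau=\tau_l$.

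The key point, which I would verify carefully, is that the assignment $(\lb,h,\tau)\mapsto f_\tau(h)\in H^1(X,\resto{\poincare}{l})$ is both well defined on isomorphism classes and bijective. For well-definedness, given an isomorphism of triples $({\lb},h,\tau)\simeq({\lb}',h',\tau')$, the induced map $g:\lb\to\lb'$ satisfies $g^*(h')=h$ and $g_{x_0}\circ\tau=\tau'$; combining this with the defining compatibilities of $f_\tau$ and $f_{\tau'}$ forces $f_{\tau'}\circ g=f_\tau$, hence $f_\tau(h)=f_{\tau'}(h')$ by functoriality of $H^1$. For injectivity one runs this argument in reverse: if two triples produce the same class, the composite $f_{\tau'}^{-1}\circ f_\tau$ is an isomorphism of triples. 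Surjectivity is immediate, since any $\eta\in H^1(X,\resto{\poincare}{l})$ is realised by the triple $(\resto{\poincare}{l},\eta,\tau_l)$.

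The main obstacle is verifying the uniqueness and well-definedness of $f_\tau$, i.e.\ checking that the frame rigidifies the automorphisms enough to pin down a single isomorphism. The automorphisms of a line bundle $\lb$ over a compact $X$ with $H^0(\shf{X})=\C$ are exactly the nonzero scalars $\C^*$; without a frame these scalars act on $H^1(X,\lb)$ and produce the non-Hausdorff quotient by $\C^*$ seen in Theorem \ref{thm:SetTh}, whereas the frame condition $f_\tau(x_0)\circ\tau=\tau_l$ selects a unique scalar and thus kills this ambiguity. I would make this rigidity explicit, as it is precisely the mechanism by which framing replaces the problematic quotient $H^1(X,\lb)/\C^*$ of the unframed theorem by the honest vector space $H^1(X,\resto{\poincare}{l})$, which is what makes the framed classification tractable.
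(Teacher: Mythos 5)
Your proposal is correct and follows essentially the same route as the paper: reduce via the groupoid equivalences $\cat{A}_{x_0}\simeq\cat{E}_{x_0}\simeq\cat{L}_{x_0}$ to classifying triples $(\lb,h,\tau)$, stratify by $[\lb]\in\Pic(X)$, and use the normalisation $\tau_l$ of $\poincare$ at $x_0$ to single out the unique frame-compatible isomorphism $f_\tau$, whence $(\lb,h,\tau)\mapsto f_\tau(h)$ is a bijection onto $H^1(X,\resto{\poincare}{\{l\}\times X})$. Your explicit justification of the uniqueness of $f_\tau$ via $\Aut(\lb)=H^0(\shf{X}^*)=\C^*$ is exactly the rigidity mechanism the paper relies on (cf.\ the proof of Remark \ref{RemRel2}).
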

Therefore the set of isomorphism classes of linearly $x_0$-framed affine line bundles
decomposes as a disjoint union of finite dimensional complex vector spaces.

\subsection{The relative case} 
\label{sec:relativecase}
Theorems \ref{groupoids} and \ref{thm:picaff} have relative versions which will play an important role in our proofs. For a complex space $T$, we define the groupoids $\cat{A}_{X,x_0}(T)$,  $\cat{E}_{X,x_0}(T)$, $\cat{L}_{X,x_0}(T)$ as follows:
\begin{enumerate}
\item The objects of $\cat{A}_{X,x_0}(T)$ are linearly $(T\times\{x_0\},l)$-framed affine line bundles $(A,\tau)$ on $T\times X$ (see Appendix \ref{appendix}).
\item The objects of  $\cat{E}_{X,x_0}(T)$ are pairs
$$(p:{\cal E}\to {\cal O}_{T\times X}, \tau: {(\ker p)}_{T\times\{x_0\}}\xra{\simeq} {\cal O}_{T\times\{x_0\}})\,,$$
where ${\cal E}$ is a locally free sheaf of rank 2 on $T\times X$, and $p$ is an epimorphism.
\item The objects of  $\cat{L}_{X,x_0}(T)$ are triples  $({\cal L},h,\tau)$, where
$({\cal L},\tau)$ is a $(T\times\{x_0\})$-framed line bundle on $T\times X$ and
$h\in H^1(T\times X, {\cal L})$.
	
\end{enumerate}

In each case the morphisms in the three groupoids are isomorphisms of pairs,
respectively triples, in the obvious sense.

For a complex space $T$ define $\Affshf(1)_{X_T,x_0}$
as the sheaf of germs of $\gaff$-valued morphisms $f:X_T\supseteq U \ra \gaff$
such that $\resto{\lambda \circ f}{U\cap T\times \{x_0\}}\equiv 1$. 

\begin{remark}\label{RemRel1}
The set of isomorphism classes of linearly $(T\times\{x_0\},l)$-framed affine line
bundles $(A,\tau)$ on $T\times X$ is identified with $H^1(T\times X,\Affshf(1)_{X_T,x_0})$.
\end{remark}

Analogously to the non-relative version we obtain functors 
$$\Phi_{x_0}(T):\cat{A}_{X,x_0}(T)\ra \cat{E}_{X,x_0}(T),$$
$$\Psi_{x_0}(T):\cat{E}_{X,x_0}(T)\ra \cat{L}_{X,x_0}(T)$$

\begin{theorem}
  The functors $\Phi_{x_0}(T)$ and $\Phi_{x_0}(T)$ are equivalences of groupoids.
\end{theorem}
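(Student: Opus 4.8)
The plan is to reduce this relative statement to the absolute equivalences of Theorems \ref{Eq1}, \ref{Eq2} and \ref{groupoids}, exploiting that every construction used there is carried out on the total space $T\times X$ and that none of it requires the base to be a reduced point. I would treat the two functors separately, in the same way their absolute counterparts are treated: the relative linearisation functor $\Phi_{x_0}(T)$ by exhibiting an explicit quasi-inverse, and the relative kernel/extension functor $\Psi_{x_0}(T)$ by showing it is fully faithful and essentially surjective. For $\Phi_{x_0}(T)$, I would define $\Phi'_{x_0}(T)$ exactly as in the absolute case: given an epimorphism $p:\mathcal{E}\ra\shf{T\times X}$ with $\mathcal{E}$ locally free of rank $2$, send $(p,\tau)$ to the affine line bundle $p^{-1}(1)$, the preimage of the constant section $1\in\Gamma(T\times X,\shf{T\times X})$, equipped with the frame induced along $T\times\{x_0\}$. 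The two composites $\Phi_{x_0}(T)\circ\Phi'_{x_0}(T)$ and $\Phi'_{x_0}(T)\circ\Phi_{x_0}(T)$ are naturally isomorphic to the identities by the same canonical isomorphisms as in Theorem \ref{Eq1}; the only new point is that these natural isomorphisms respect the framing, which is immediate because $\lambda(A)\restrict_{T\times\{x_0\}}$ is identified with $(\ker p)\restrict_{T\times\{x_0\}}$ compatibly with $\tau$.

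For $\Psi_{x_0}(T)$ I would argue, as in Theorem \ref{Eq2}, fully faithful plus essentially surjective, without producing a quasi-inverse. Two facts make this work uniformly in $T$. First, an epimorphism $p:\mathcal{E}\ra\shf{T\times X}$ onto the structure sheaf splits locally, so its kernel $\mathcal{L}=\ker p$ is locally free of rank $1$, and $\mathcal{L}\restrict_{T\times\{x_0\}}$ is a line bundle carrying the frame $\tau$. Second, since $\shf{T\times X}$ is locally free the higher sheaf-$\mathcal{E}xt$ vanish, so the local-to-global spectral sequence yields a canonical identification $\Ext^1(\shf{T\times X},\mathcal{L})\iso\coh{1}{T\times X}{\mathcal{L}}$, under which the extension class of $0\ra\mathcal{L}\ra\mathcal{E}\ra\shf{T\times X}\ra0$ is exactly the class $h$ attached to the triple. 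Essential surjectivity then follows because every $h\in\coh{1}{T\times X}{\mathcal{L}}$ is realised by an extension whose middle term is automatically locally free of rank $2$; full faithfulness follows from the standard description of morphisms of extensions, together with the observation that the frame-compatibility conditions on the two sides correspond under $\Psi_{x_0}(T)$.

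The main obstacle is to ensure these arguments remain valid over an arbitrary, possibly non-reduced, base $T$: one cannot check local freeness or the extension-theoretic identifications fibrewise, so I would phrase everything directly in terms of sheaves on $T\times X$. The decisive inputs are the local splitting above — which gives local freeness of the kernel and of the middle term with no reducedness or flatness hypothesis — and the fact that the frame is now an isomorphism of sheaves on $T\times\{x_0\}$ rather than a linear map at a single point, so one must verify that restriction to $T\times\{x_0\}$ commutes with the constructions. Once these are in place, the absolute arguments transcribe verbatim, both functors are equivalences of groupoids, and the theorem follows.
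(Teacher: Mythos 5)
Your proposal is correct and follows exactly the route the paper intends: the paper gives no written proof of this theorem, simply asserting that the relative functors are obtained ``analogously to the non-relative version'' (with $\Phi$ handled via the explicit quasi-inverse $p\mapsto p^{-1}(1)$ as in Theorem \ref{Eq1}, and $\Psi$ via fully faithful plus essentially surjective as in Theorem \ref{Eq2}). Your write-up supplies precisely the details the paper omits — in particular the local splitting of $p:\mathcal{E}\to\shf{T\times X}$ giving local freeness of the kernel without any reducedness hypothesis, and the compatibility of the framings under restriction to $T\times\{x_0\}$ — and is a sound justification of the stated result.
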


Let again $\poincare$ be a Poincaré line bundle normalized at $x_0$. This means that we have a fixed a framing $\mathfrak{t}_{x_0}:\resto{\poincare}{\Pic(X)\times\{x_0\}}\xra{\simeq} {\cal O}_{\Pic(X)\times\{x_0\}}$.
\begin{remark}\label{RemRel2}
Isomorphism classes of triples $({\cal L}, h,\tau)\in {\cal O}b(\cat{A}_{X,x_0}(T))$   correspond bijectively to pairs $(\varphi,\sigma)$ where 
$$\varphi:T\to\Pic(X),\ \sigma\in H^1((\varphi\times\id_X)^*(\poincare))\,.$$
\end{remark}
\begin{proof}
Let $({\cal L}, h,\tau)\in {\cal O}b(\cat{A}_{X,x_0}(T))$, and let $\varphi:
T\to \Pic(X)$ the map defined by ${\cal L}$, so ${\cal L}\simeq
(\varphi\times\id_X)^*(\poincare)$. Since $X$ is compact and $H^0(X)=\C$, the
set of isomorphisms ${\cal L}\xra{\simeq} (\varphi\times\id_X)^*(\poincare)$ is
an $H^0(T,{\cal O}_T^*)$-torsor, so   there exists a unique such isomorphism
which maps $\tau$ on the pullback framing
$(\varphi\times\id_X)^*(\mathfrak{t}_{x_0})$ of
$(\varphi\times\id_X)^*(\poincare)$. Using this isomorphism we obtain a well
defined class $\sigma\in H^1((\varphi\times\id_X)^*(\poincare))$.

 \end{proof}

Using Remarks \ref{RemRel1},  \ref{RemRel2} we obtain  canonical identifications
\begin{equation}\label{ident} 
\hspace*{-2mm}H^1(X_T,\Affshf(1)_{X_T,x_0}) \xra{\simeq} \{(\varphi,\sigma)|\  T\xra{\varphi} \Pic(X),\ \sigma \in H^1(X_T,(\varphi\times\id_X)^*\poincare)\} 	
\end{equation}
which are functorial with respect to $T$.
Now fix $T$, and use the identifications (\ref{ident}) associated with open sets $U\subset T$. We obtain a presheaf isomorphism 
\begin{equation}\label{F} 
F^T_{x_0}:{\cal A}_{x_0}^T\to {\cal C}^T_{x_0}\,,
\end{equation}
where ${\cal A}_{x_0}^T$, ${\cal C}_{x_0}^T$ are the presheaves of sets on  $T$ defined by
$$U\mapsto H^1(\Affshf(1)_{X_U,x_0}),\ U\mapsto \{(\varphi,\sigma)|\
\varphi:U\to \Pic(X),\ \sigma \in H^1(\varphi\times\id_X)^*(\poincare))\}$$  
respectively.  We are interested in the sheaves $\widetilde{{\cal A}_{x_0}^T}$,
$\widetilde{{\cal C}_{x_0}^T}$ associated with these presheaves. 

\begin{lemma} The presheaf ${\cal B}_{x_0}^T$ of sets on $T$ given by 
$$U\mapsto \{(\varphi,\sigma)|\ \varphi:U\to \Pic(X),\ \sigma \in H^0(R^1 f_{U*}
(\varphi\times\id_X)^*(\poincare))\}\,.$$ 
is a sheaf. There exists a  natural presheaf morphism $\eta^T_{x_0}:{\cal
  C}^T_{x_0}\to {\cal B}_{x_0}^T$ which induces an isomorphism
$\widetilde{\eta^T_{x_0}}:\widetilde{{\cal C}_{x_0}^T}\to {\cal B}_{x_0}^T$. 
\end{lemma}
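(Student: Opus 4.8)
The plan is to deduce all three assertions from two standard facts: that $U\mapsto\Hom(U,\Pic(X))$ is a sheaf of sets on $T$, and that higher direct images along the projections $f_U\colon X_U\to U$ commute with restriction to open subsets of the base. For the latter, note that for open $V\subseteq U$ one has $(\varphi|_V\times\id_X)^*\poincare=\big((\varphi\times\id_X)^*\poincare\big)|_{X_V}$, and since $V\hookrightarrow U$ is an open (hence flat) immersion, $R^1f_{V*}\big((\varphi|_V\times\id_X)^*\poincare\big)\cong\big(R^1f_{U*}(\varphi\times\id_X)^*\poincare\big)|_V$; this follows directly from the description of $R^1f_{U*}$ as the sheaf associated with $W\mapsto H^1(X_W,(\varphi\times\id_X)^*\poincare)$.

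To see that ${\cal B}^T_{x_0}$ is a sheaf, I would verify the gluing axiom directly. Given an open cover $U=\bigcup_i U_i$ and sections $(\varphi_i,\sigma_i)$ agreeing on overlaps, the morphisms $\varphi_i$ glue to a unique $\varphi\colon U\to\Pic(X)$ because $\Hom(-,\Pic(X))$ is a sheaf. The base-change identification above then lets me regard each $\sigma_i$ as a section over $U_i$ of the \emph{single} sheaf $\mathcal{G}:=R^1f_{U*}(\varphi\times\id_X)^*\poincare$; since these agree on overlaps and $\mathcal{G}$ is a sheaf, they glue to a unique $\sigma\in H^0(U,\mathcal{G})$, and $(\varphi,\sigma)$ is the required unique amalgamation.

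For the morphism $\eta^T_{x_0}$, I would use the edge map of the Leray spectral sequence for $f_U$: for each $U$ and each $\varphi$ the five-term exact sequence supplies a natural map $H^1(X_U,(\varphi\times\id_X)^*\poincare)\to H^0(U,R^1f_{U*}(\varphi\times\id_X)^*\poincare)$, and I set $\eta^T_{x_0}(U)\colon(\varphi,\sigma)\mapsto(\varphi,\mathrm{edge}(\sigma))$. Compatibility with restriction follows from the functoriality of the Leray edge map together with the base-change isomorphism, so $\eta^T_{x_0}$ is a morphism of presheaves. Because ${\cal B}^T_{x_0}$ is a sheaf, $\eta^T_{x_0}$ factors uniquely through the sheafification, giving $\widetilde{\eta^T_{x_0}}\colon\widetilde{{\cal C}^T_{x_0}}\to{\cal B}^T_{x_0}$.

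Finally, to prove $\widetilde{\eta^T_{x_0}}$ is an isomorphism it suffices to show $\eta^T_{x_0}$ is bijective on all stalks. Both presheaves map to the stalk $\mathcal{M}_t$ of $\Hom(-,\Pic(X))$ at $t$ via the first component, and $\eta^T_{x_0}$ is the identity there; so I only need bijectivity on the fibre over a fixed germ, represented by some $\varphi$ on $U_0$, where I write $\mathcal{F}:=(\varphi\times\id_X)^*\poincare$. The fibre of $({\cal C}^T_{x_0})_t$ is $\varinjlim_{U\ni t}H^1(X_U,\mathcal{F})$, while the fibre of $({\cal B}^T_{x_0})_t$ is the stalk $(R^1f_{U_0*}\mathcal{F})_t$. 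The crucial observation is that, since $R^1f_{U_0*}\mathcal{F}$ is by definition the sheaf associated with $W\mapsto H^1(X_W,\mathcal{F})$, its stalk at $t$ is exactly $\varinjlim_{W\ni t}H^1(X_W,\mathcal{F})$ --- the very same filtered colimit. Under this identification the edge map passes to the canonical structure map into the colimit, so the fibre map is the identity and hence bijective. The main point requiring care is precisely this last identification: one must check that the Leray edge map coincides with the canonical comparison from the presheaf $W\mapsto H^1(X_W,\mathcal{F})$ to its sheafification, so that on stalks it degenerates to the identity of one colimit with itself; granting this, the two colimits literally agree and the isomorphism drops out.
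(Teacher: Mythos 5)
Your proof is correct, and the first two steps (the sheaf axiom, and the construction of $\eta^T_{x_0}$ from the Leray edge maps $H^1(X_U,(\varphi\times\id_X)^*\poincare)\to H^0(U,R^1f_{U*}(\varphi\times\id_X)^*\poincare)$) coincide with what the paper does, the paper merely declaring the first claim obvious. Where you genuinely diverge is the last step. The paper proves that $\widetilde{\eta^T_{x_0}}$ is an isomorphism by exhibiting a basis of the topology of $T$, namely the Stein open subsets $U$, on which $\eta^U_{x_0}$ is already a bijection \emph{before} sheafification: by Cartan's Theorem B the Leray spectral sequence of the proper projection $X_U\to U$ degenerates (this implicitly also uses coherence of the direct images, i.e.\ Grauert's direct image theorem), so $H^1(X_U,\cdot)\simeq H^0(U,R^1f_{U*}(\cdot))$. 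You instead check bijectivity on stalks: fixing the germ of $\varphi$ at $t$, both fibres are identified with the same filtered colimit $\varinjlim_{U\ni t}H^1\big(X_U,(\varphi\times\id_X)^*\poincare\big)$, because $R^1f_{U*}$ is by definition the sheafification of $W\mapsto H^1(X_W,\cdot)$ and sheafification preserves stalks; you correctly isolate the one compatibility that must be verified, namely that the Leray edge map agrees with the canonical presheaf-to-sheafification comparison map (a standard fact). Your route is softer and more general --- it uses no properness, no coherence of direct images and no Theorem B, only the definition of $R^1f_*$ and base change along open immersions --- whereas the paper's route yields the stronger, more concrete statement that $\eta^U_{x_0}$ is a bijection on every Stein open set, which gives explicit local models for the sheaf ${\cal B}^T_{x_0}$ on a basis. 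Both are complete proofs of the lemma as stated.
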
 
\begin{proof}  The first claim is obvious. The natural  presheaf morphism
  $\eta^T_{x_0}$ is given, for any open set $U\subset T$  by
  $\eta^U_{x_0}(\varphi,\sigma):=(\varphi,\tilde \sigma)$, where $\tilde \sigma$
  is the image of $\sigma$ under the canonical map 
$$\coh{1}{X_U}{(\varphi \times \id_X)^*\poincare}\to
\coh{0}{U}{R^1{f_U}_*(\varphi \times \id_X)^*\poincare}\,.$$
For the last statement it suffices to show that the topology of $T$ has a basis
${\cal U}$ such that $\eta^U_{x_0}$ is a bijection for any $U\in {\cal U}$.
Using Cartan's Theorem B and the Leray spectral sequence associated with the
sheaf $(\varphi \times \id_X)^*\poincare$ and the proper projection $U\times
X\to U$, we see that $\eta^U_{x_0}$ is a bijection if $U$ is Stein. 

\end{proof}

The sheaf  $\widetilde{{\cal A}_{x_0}^T}$ associated with ${\cal A}_{x_0}^T$
is the non-abelian first direct image $R^1 {f_T}_{*}(\Affshf(1)_{X_T,x_0})$.  
\begin{remark}\label{rem:sheafiso}
Via the sheaf isomorphism  $\widetilde{\eta^T_{x_0}}$,  	the isomorphism $F^T_{x_0}$ induces a sheaf isomorphism
$$\mathbb{F}^T_{x_0}:R^1 {f_T}_{*}(\Affshf(1)_{X_T,x_0})\xra{\simeq} {\cal B}_{x_0}^T\,.
$$
\end{remark}

We are almost ready to proof our first main theorem. 

\begin{definition}
  The functor
  $$\Picaff_{X,x_0}:\An^{op} \ra \Set,$$
  defined by
  $$T \mapsto \coh{0}{T}{R^1{f_T}_*\Affshf(1)_{X_T,{x_0}}}$$
  is called the $x_0$-framed affine Picard functor.
\end{definition}

Furthermore define a functor ${\cal B}_{x_0}:\An^{op} \ra \Set$
which acts on objects by 
\begin{align*}
  {\cal B}_{X,x_0}(T)&:=H^0(T,{\cal B}_{X,x_0}^T) \\
                   &=\big\{(T\textmap{\varphi}\Pic(X),\sigma)|\ \sigma\in H^0(T,f_{T*}((\varphi\times\id)^*(\poincare))\big\}\,,
\end{align*}
and on morphisms $u:S\to T$ via the base change maps. 

Consider now  the functor 
$$B_{X,x_0}: (\An/\Pic(X))^{op} \to \Set$$
 associated with the sheaf $\poincare$ on $\Pic(X)\times X$, where $\Pic(X)\times X$ is regarded as a complex space over $\Pic(X)$:
$$B_{X,x_0}(T\xra{g} \Pic(X))=\coh{0}{T}{R^1 {f_T}_*(g\times
  \id)^*\poincare}\,.$$
Our functor ${\cal B}_{X,x_0}$ is related to the Bingener functor by
\begin{remark}\label{Bfunctors}
With the notation introduced in Appendix \ref{functorstuff} we have
$$\widetilde{B_{X,x_0}}={\cal B}_{X,x_0}\,.$$	
\end{remark}

Our first main theorem is the following:
\begin{theorem}
  \label{thm:main1} Let $X$ be a compact complex space with $H^0({\cal O}_X)\simeq\C$. 
  \begin{enumerate} 
  \item 	There exists an isomorphism of functors $\mathbb{F}_{x_0}:\Picaff_{X,x_0} \xra{\simeq} {\cal B}_{X,x_0}$.
   \item  The functor $\Picaff_{X,x_0}$ is represented by a complex space $V$ if and
  only if there exists $u:V\to \Pic(X)$  which represents  $B_{X,x_0}$. 
  \end{enumerate}
\end{theorem}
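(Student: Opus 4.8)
The plan is to treat the two assertions separately, deriving both from the sheaf-theoretic comparison already in place.

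For (1), I would simply globalize the sheaf isomorphism of Remark~\ref{rem:sheafiso}. For each complex space $T$, applying the functor $H^0(T,-)$ to $\mathbb{F}^T_{x_0}\colon R^1{f_T}_*(\Affshf(1)_{X_T,x_0})\xra{\simeq}{\cal B}_{x_0}^T$ yields a bijection
\[
\Picaff_{X,x_0}(T)=H^0\big(T,R^1{f_T}_*(\Affshf(1)_{X_T,x_0})\big)\xra{\simeq}H^0(T,{\cal B}_{x_0}^T)={\cal B}_{X,x_0}(T).
\]
The remaining content is to check that these bijections assemble into a natural transformation, i.e.\ that they commute with the base-change maps induced by an arbitrary morphism $S\to T$. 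This is inherited from the functoriality in $T$ of the identification~(\ref{ident}): the presheaf isomorphism $F^T_{x_0}$, the sheafification isomorphism $\widetilde{\eta^T_{x_0}}$, and hence $\mathbb{F}^T_{x_0}$, are all compatible with pullback, so base change on one side matches base change on the other. I expect this compatibility check to be the only genuinely technical point of the theorem; it is routine bookkeeping rather than a new idea, since the substantive work has already been absorbed into the construction of $\mathbb{F}^T_{x_0}$.

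For (2), the strategy is to reduce to the general relation between relative and absolute representability established in Appendix~\ref{functorstuff}. By part~(1) we may replace $\Picaff_{X,x_0}$ by ${\cal B}_{X,x_0}$, and by Remark~\ref{Bfunctors} we have ${\cal B}_{X,x_0}=\widetilde{B_{X,x_0}}$. Thus it suffices to show that $B_{X,x_0}\colon(\An/\Pic(X))^{op}\to\Set$ is represented by an object $u\colon V\to\Pic(X)$ of $\An/\Pic(X)$ if and only if its associated functor $\widetilde{B_{X,x_0}}$ is represented by $V$.

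This equivalence is the formal defining property of the tilde construction. In one direction, if $u\colon V\to\Pic(X)$ represents $B_{X,x_0}$ then for every $T$ one has, naturally in $T$,
\[
\widetilde{B_{X,x_0}}(T)=\coprod_{g\colon T\to\Pic(X)}B_{X,x_0}(g)\iso\coprod_{g}\{h\colon T\to V\mid u\circ h=g\}=\Hom(T,V),
\]
so $V$ represents $\widetilde{B_{X,x_0}}$. Conversely, if $V$ represents $\widetilde{B_{X,x_0}}$, the universal element attached to $\id_V\in\Hom(V,V)$ is a pair whose first component is a canonical map $u\colon V\to\Pic(X)$, and unwinding the definitions shows that $u$ represents $B_{X,x_0}$. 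Combining this with part~(1), the same complex space $V$ represents $\Picaff_{X,x_0}$, and the structural map $u$ is exactly the one read off from the universal pair $(\varphi,\sigma)\in\Picaff_{X,x_0}(V)\iso{\cal B}_{X,x_0}(V)$. Granting the appendix's general statement, this completes the argument.
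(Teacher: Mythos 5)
Your proposal is correct and follows essentially the same route as the paper: part (1) is obtained by applying $H^0(T,-)$ to the sheaf isomorphism of Remark~\ref{rem:sheafiso} and invoking the functoriality in $T$ of the identifications~(\ref{ident}), and part (2) reduces via Remark~\ref{Bfunctors} to the general equivalence of Proposition~\ref{app:rep} between representability of $B_{X,x_0}$ and of $\widetilde{B_{X,x_0}}$. The only difference is that you spell out the proof of the appendix proposition (via the coproduct decomposition of $\widetilde{B}(T)$ and the universal element), which the paper simply cites.
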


\begin{proof}  
  Recall that the identifications (\ref{ident}) are functorial in $T$. Then so
  is the sheaf isomorphism $\mathbb{F}_{x_0}^T$ from Remark \ref{rem:sheafiso}, and these isomorphisms  yield  an isomorphism of functors $\mathbb{F}_{x_0}:\Picaff_{X,x_0} \xra{\simeq} {\cal
    B}_{X,x_0}$. The claim follows now from Remark \ref{Bfunctors} and Proposition \ref{app:rep}.
\end{proof}

\begin{remark}
The projection on the first factor gives a morphism of functors 	$P_1:{\cal B}_{X,x_0}\to \mathrm{Hom}_{\An}(\, \cdot\,, \Pic(X))$. The composition $P_1\circ \mathbb{F}_{x_0}$ has an obvious geometric interpretation: it maps  a section 
$$\mathfrak{s}\in \coh{0}{T}{R^1{f_T}_*\Affshf(1)_{X_T,{x_0}}}= \Picaff_{X,x_0}(T)$$

 to the map $T\to \Pic(X)$ associated with the linearization 
 $$\sigma:=l_{x_0}(\mathfrak{s})\in  \coh{0}{T}{R^1{f_T}_*{\cal O}^*_{X_T,{x_0}}}$$
 (see Remark  \ref{Pic2functorsnew}).
\end{remark}

Fix now a Chern class $c\in \NS(X)$. We define the subsheaf
$$R^1_c\,{f_T}_*\Affshf(1)_{X_T,{x_0}}\subseteq R^1{f_T}_*\Affshf(1)_{X_T,{x_0}}\,,$$
as the sheaf associated to the presheaf $U\mapsto
H^1(X_U,\Affshf(1)_{X_U,x_0})^c$, where
$$H^1(X_U,\Affshf(1)_{X_U,x_0})^c\subseteq H^1(X_U,\Affshf_{X_U,x_0})$$
consists of the subset of isomorphism classes of
affine line bundles whose restriction to $X$ have Chern class $c$ (i.e. the
preimage of $\Pic^c(X)$ under the natural map $H^1(X_U,\Affshf_{X_U,x_0})\ra \Pic(X)$).
The functors
$$\Picaff^c_{X,x_0}:\An^{op}\ra \Set\,,
\mathcal{B}^c_{X,x_0}:\An^{op}\ra \Set\,,$$
are then defined via  
$$T\mapsto H^0(T,R^1_c\, {f_T}_*\Affshf(1)_{X_T,{x_0}})\,,$$
and
$$T\mapsto \{(T\xra{\varphi} \Pic^c(X),\sigma)|\ \sigma\in
H^0(T,f_{T_*}((\varphi\times \id)^*(\poincare^c)))\}\,. $$

One obtains as in \ref{thm:main1}:
\begin{corollary}
  \label{cor:maincor}
  Let $X$ be a compact complex space with $H^0({\cal O}_X)\simeq\C$. 
  \begin{enumerate} 
  \item There exists an isomorphism of functors $f_{x_0}:\Picaff^c_{X,x_0} \xra{\simeq} {\cal B}^c_{X,x_0}$.
  \item The functor $\Picaff^c_{X,x_0}$ is represented by a complex space $V$ if and
    only if there exists $u:V\to \Pic^c(X)$  which represents  $B^c_{X,x_0}$. 
  \end{enumerate}
\end{corollary}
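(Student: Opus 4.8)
The plan is to follow the proof of Theorem~\ref{thm:main1} essentially verbatim, checking at each step that the constructions there respect the splitting of everything according to Chern class. The corollary is stated as ``one obtains as in \ref{thm:main1}'', and indeed the only new content is that the isomorphism and the representability equivalence are compatible with passing to the $c$-graded pieces.

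For part~(1), I would start from the functor isomorphism $\mathbb{F}_{x_0}:\Picaff_{X,x_0}\xra{\simeq}{\cal B}_{X,x_0}$ already produced in Theorem~\ref{thm:main1}, together with the observation recorded in the Remark following that theorem, namely that the composition of $\mathbb{F}_{x_0}$ with the first projection $P_1$ to $\Hom_{\An}(\,\cdot\,,\Pic(X))$ is exactly the linearization map. This says that $\mathbb{F}_{x_0}$ intertwines the two natural maps to $\Pic(X)$: the Chern-class/linearization map on the affine side, and $P_1$ on the ${\cal B}$ side. Now $\Pic^c(X)$ is open and closed in $\Pic(X)$, the Chern class being a topological invariant and hence locally constant in holomorphic families; consequently ``the linearization factors through $\Pic^c(X)$'' and ``$\varphi$ factors through $\Pic^c(X)$'' are locally constant conditions on the respective sheaves. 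Since, by their definitions, the subfunctors $\Picaff^c_{X,x_0}$ and ${\cal B}^c_{X,x_0}$ are precisely the loci cut out by these two conditions, the intertwining above forces $\mathbb{F}_{x_0}$ to restrict to the desired isomorphism $f_{x_0}:\Picaff^c_{X,x_0}\xra{\simeq}{\cal B}^c_{X,x_0}$.

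The one point requiring genuine care is that $\Picaff^c_{X,x_0}$ is defined through the \emph{subsheaf} $R^1_c\,{f_T}_*\Affshf(1)_{X_T,x_0}$, which is the sheafification of the presheaf $U\mapsto H^1(X_U,\Affshf(1)_{X_U,x_0})^c$ cut out by the Chern-class condition. I would therefore verify that the sheaf isomorphism $\mathbb{F}^T_{x_0}$ of Remark~\ref{rem:sheafiso} carries this subsheaf isomorphically onto the subsheaf of ${\cal B}^T_{x_0}$ on which $\varphi$ factors through $\Pic^c(X)$. This is exactly where the open-closedness of $\Pic^c(X)$ enters: because the Chern-class condition is local and stable under restriction to open sets, it is preserved by sheafification, and it matches the locally constant condition on $\varphi$ stalk by stalk. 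I expect this sheaf-level matching to be the main obstacle, modest though it is; everything else in part~(1) is a formal restriction of an already-established isomorphism.

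For part~(2), I would invoke the $c$-graded analogue of Remark~\ref{Bfunctors}, that is $\widetilde{B^c_{X,x_0}}={\cal B}^c_{X,x_0}$, whose proof is word-for-word that of the ungraded case with $\poincare^c$ and $\Pic^c(X)$ in place of $\poincare$ and $\Pic(X)$. Feeding this identity, the isomorphism $f_{x_0}$ of part~(1), and Proposition~\ref{app:rep} into the same chain of reasoning as in Theorem~\ref{thm:main1}, one concludes that $\Picaff^c_{X,x_0}$ is represented by a complex space $V$ if and only if $\widetilde{B^c_{X,x_0}}$ is, if and only if there exists $u:V\to\Pic^c(X)$ representing $B^c_{X,x_0}$. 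Once the sheafification defining $R^1_c$ has been matched with the Chern-class strata of $\Pic(X)$, this part is a verbatim repetition of the appendix argument.
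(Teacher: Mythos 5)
Your proposal is correct and follows essentially the same route as the paper, which itself only says ``One obtains as in Theorem~\ref{thm:main1}'': restrict the functor isomorphism $\mathbb{F}_{x_0}$ to the Chern-class-$c$ pieces using that $\Pic^c(X)$ is open and closed in $\Pic(X)$, then apply the $c$-graded analogue of Remark~\ref{Bfunctors} together with Proposition~\ref{app:rep}. Your extra care about matching the sheafification defining $R^1_c$ with the stratification of $\Pic(X)$ is a reasonable elaboration of a point the paper leaves implicit, not a deviation from its argument.
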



\section{Representability}
\label{sec:representability}

Next we will look for conditions under which the above functors are
representable.  
Let $M\xra{f} S$ be a separated morphism,  $\mathcal{F}$ be an $S$-flat coherent
$\shf{M}$-module, whose support is proper over $S$, and $p$ be a natural number.
For a complex space $T\xra{g} S$ define
$$F_p^{\mathcal{F}}(T)=\coh{0}{T}{R^p {f_{T}}_*(\mathcal{F}_T)}\,,$$
where $M_T=T\times_S M, f_T=T\times_S f$ and $\mathcal{F}_T$ is the preimage of
$\mathcal{F}$ under the projection $M_T \ra M$.
Then $F_p^{\mathcal{F}}$ defines in a natural way a functor $(\An/S)^{op} \ra \Vect(\C)$, where $\Vect(\C)$
is the category of vector spaces over $\C$. We begin by recalling a representability theorem  of Bingener which will play an important role of our arguments:

\begin{theorem}[\cite{bingener1} Satz (8.1)(3)]
 \label{thm:representability-1} 
  With the notations and under the assumptions above the following holds:
  
  If $R^{p-1}f_*(\mathcal{F})_s \ra H^{p-1}(M_s,\mathcal{F}_s)$ is surjective
  for every point $s\in S$, then $F$ is represented by a linear fiber space over $S$.
\end{theorem}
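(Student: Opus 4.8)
The plan is to prove representability locally on $S$ and glue, the whole content being an analysis of a complex that computes the higher direct images universally. Since a representing linear fibre space is unique up to canonical isomorphism, the assertion is local on $S$; so I would first shrink $S$ to a relatively compact Stein open set. On such a set the hypotheses on $f$ (separatedness, properness of $\operatorname{supp}\mathcal{F}$ over $S$, coherence and $S$-flatness of $\mathcal{F}$) give, by Grauert's coherence theorem together with the Forster--Knorr / Kiehl--Verdier construction underlying Bingener's foundations, a bounded complex $L^\bullet=(\cdots\to L^{p-1}\xrightarrow{d^{p-1}}L^{p}\xrightarrow{d^{p}}L^{p+1}\to\cdots)$ of free $\mathcal{O}_S$-modules of finite rank, together with isomorphisms $R^{q}f_{T*}(\mathcal{F}_T)\cong\mathcal{H}^{q}(g^{*}L^\bullet)$ natural in $g:T\to S$ and compatible with base change; over the point $s$ this recovers $H^{q}(M_s,\mathcal{F}_s)=\mathcal{H}^{q}(L^\bullet\otimes k(s))$.

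Next I would record an unconditional reduction. Since $g^{*}$ is right exact, $g^{*}\operatorname{coker}(d^{p-1})=\operatorname{coker}(g^{*}d^{p-1})$, and because $\operatorname{im}(d^{p-1})\subseteq\ker(d^{p})$ the differential $d^{p}$ factors through $W:=\operatorname{coker}(d^{p-1})$. Hence, functorially in $T$,
\[
R^{p}f_{T*}(\mathcal{F}_T)=\ker\big(g^{*}W\xrightarrow{\ \overline{g^{*}d^{p}}\ }g^{*}L^{p+1}\big),\qquad F_{p}^{\mathcal{F}}(T)=\ker\big(H^{0}(T,g^{*}W)\to H^{0}(T,g^{*}L^{p+1})\big).
\]
If $W$ is locally free this identifies $F_{p}^{\mathcal{F}}$ with the functor of a linear fibre space: setting $\mathcal{Q}:=\operatorname{coker}\big((\overline{d^{p}})^{\vee}:(L^{p+1})^{\vee}\to W^{\vee}\big)$ and dualising the presentation, one gets $\mathcal{H}om_{\mathcal{O}_T}(g^{*}\mathcal{Q},\mathcal{O}_T)=\ker(g^{*}W\to g^{*}L^{p+1})$, whence $\mathrm{Hom}_S(T,\mathbb{V}(\mathcal{Q}))=F_{p}^{\mathcal{F}}(T)$ naturally in $T$, where $\mathbb{V}(\mathcal{Q})$ is the linear fibre space of $\mathcal{Q}$ in the sense of Fischer. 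The local representing spaces then glue by uniqueness of the representing object.

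Everything therefore reduces to arranging that $W=\operatorname{coker}(d^{p-1})$ be locally free, and this is where the hypothesis enters. The universal-coefficient analysis of the base-change complex gives the short exact sequence
\[
0\to\mathcal{H}^{p-1}(L^\bullet)\otimes k(s)\to H^{p-1}(M_s,\mathcal{F}_s)\to\operatorname{Tor}_1\big(\mathcal{H}^{p}(L^\bullet),k(s)\big)\to0 ,
\]
so the map $R^{p-1}f_{*}(\mathcal{F})_s\to H^{p-1}(M_s,\mathcal{F}_s)$ is surjective for every $s$ if and only if $\operatorname{Tor}_1(R^{p}f_{*}\mathcal{F},k(s))=0$ for every $s$, i.e. if and only if $R^{p}f_{*}\mathcal{F}$ is locally free (equivalently, $R^{p-1}f_{*}$ commutes with base change). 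I would then show that this local freeness allows one to normalise the complex over a small Stein set so that $\operatorname{coker}(d^{p-1})$ becomes locally free — by splitting off the locally free cohomology in degree $p$ and adjusting the terms in degrees $\ge p$ — thereby feeding into the construction above.

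The main obstacle is twofold and lies in the background machinery rather than in the formal construction. First, the existence of the base-change complex $L^\bullet$ between finite free sheaves, with base-change-compatible cohomology, is a substantial analytic input: it is the step that genuinely uses properness of the support and $S$-flatness, and it is markedly harder in the complex-analytic category than over a Noetherian base. Second, and more delicate, is the normalisation just described — turning the bare local freeness of $R^{p}f_{*}\mathcal{F}$ into a representing complex whose $\operatorname{coker}(d^{p-1})$ is itself locally free, since $W$ is a priori only an extension of $R^{p}f_{*}\mathcal{F}$ by the generally non-free $\operatorname{im}(d^{p})$. Once that normal form is secured, the identification with $\mathbb{V}(\mathcal{Q})$ and the gluing are purely formal.
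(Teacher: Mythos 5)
First, a remark on the comparison: the paper offers no proof of this statement --- it is quoted as is from Bingener (\cite{bingener1}, Satz 8.1(3)), so there is no internal argument to measure yours against. Your overall architecture is the standard route to such results and is sound: reduce to a relatively compact Stein piece of $S$; invoke the Forster--Knorr/Kiehl--Verdier base-change complex $L^\bullet$ of finite free modules; identify $R^pf_{T*}(\mathcal{F}_T)$ with $\ker\big(g^*W\to g^*L^{p+1}\big)$ for $W=\operatorname{coker}(d^{p-1})$; and, once $W$ is locally free, represent the functor by the Fischer linear fibre space of $\mathcal{Q}=\operatorname{coker}\big((L^{p+1})^\vee\to W^\vee\big)$ and glue by uniqueness. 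All of that is correct.

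The genuine defect is in the step where the hypothesis enters. The base-change sequence you write is not correct: the cokernel of $\mathcal{H}^{p-1}(L^\bullet)\otimes k(s)\to H^{p-1}(M_s,\mathcal{F}_s)$ is $\operatorname{Tor}_1\big(\operatorname{coker}(d^{p-1}),k(s)\big)$, not $\operatorname{Tor}_1\big(\mathcal{H}^{p}(L^\bullet),k(s)\big)$; the two differ by $\operatorname{Tor}$-terms of $\operatorname{im}(d^{p})$ coming from the extension $0\to\mathcal{H}^p(L^\bullet)\to W\to\operatorname{im}(d^p)\to 0$, and the left-hand map of your sequence need not be injective either. Consequently your ``i.e.'' --- that the hypothesis is equivalent to local freeness of $R^pf_*\mathcal{F}$ --- is false in general; that equivalence requires the additional assumption that the degree-$p$ comparison map $R^pf_*(\mathcal{F})_s\to H^p(M_s,\mathcal{F}_s)$ is also surjective, which is not part of the hypothesis. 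This error is what pushes you into the ``normalisation'' of the complex, which you yourself flag as the main remaining difficulty and never carry out; as written the proof is incomplete precisely there. The fix, however, dissolves the difficulty rather than deepening it: with the corrected sequence, surjectivity of $R^{p-1}f_*(\mathcal{F})_s\to H^{p-1}(M_s,\mathcal{F}_s)$ for all $s$ is \emph{literally equivalent} to $\operatorname{Tor}_1\big(W,k(s)\big)=0$ for all $s$, hence, by the local criterion of flatness for a coherent sheaf over the Noetherian local rings $\mathcal{O}_{S,s}$, to $W$ being locally free --- which is exactly the input your $\mathbb{V}(\mathcal{Q})$ construction needs. With that correction, and granting the (substantial, but classical) existence of the base-change complex in the analytic category, your argument closes without any renormalisation of $L^\bullet$.
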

We refer to \cite[p. 49]{fischer} for the theory  of linear  spaces in the complex analytic category.\\

Note that Bingener's result gives only a sufficient representability condition.
We will see that, in our special case, the surjectivity condition in Bingener's
Theorem \ref{thm:representability-1} is equivalent to the very simple condition
required in our Theorem \ref{thm:final}, and that this condition is also
necessary.

In our case, $M=\Pic(X)^c\times X$, $S=\Pic(X)^c$, $f=\pr_1$ is the first
projection and $\mathcal{F}=\poincare^c$.
Since $X$ is compact, all the conditions listed above are fulfilled. Moreover
the functor $B^c_{X,x_0}$ introduced in section \ref{sec:relativecase} coincides
with Bingener's functor $F^{\poincare^c}_{1}$, so Theorem
\ref{thm:representability-1} applies.  
Furthermore, we have $p=1$ so we need to examine the morphisms
\begin{equation}\label{eq:coflat}
f_*(\mathcal{F})_s\ra \coh{0}{M_s}{\mathcal{F}_s}.
\end{equation}

Recall the following definition from \cite[p. 116, p. 122]{banica}:
\begin{definition}
  Let $M\xra{f} S$ be morphism of complex spaces and  $\ff$ a coherent
  sheaf on $M$ with proper support, which is flat over $S$, and let $s\in S$.
  $\ff$ is
  \emph{cohomologically flat in dimension $p$} at $s$ if
the canonical maps
$$ R^{p-1}f_*(\ff)_s \ra H^{p-1}(X_s,\ff_s),\ 
R^{p}f_*(\ff)_s \ra H^{p}(X_s,\ff_s)\,, $$
are surjective. $\ff$ is called cohomologically flat in dimension $p$ over $S$ if
  it is cohomologically flat at every $s\in S$.
\end{definition}

\begin{remark}
  \label{cohplat0}
 Note that cohomological flatness in dimension 0 is equivalent to the surjectivity
of the maps $f_*(\ff)_s \ra H^0(X_s,\ff_s),$ for $s\in S$.
\end{remark}
A general criterion for cohomological flatness is given by the following
theorem, due to Grauert, see \cite[p. 134, Theorem 4.2 (ii)]{banica}.

\begin{theorem}[Continuity theorem]
  \label{continuity}
  Let $g:M\ra S$ be a proper morphism, $\mathcal{G}$ a coherent sheaf on $M$,
  flat over $S$ and $n$ a natural number.
  If $\mathcal{G}$ is cohomologically flat in dimension $n$ over $S$, then the
  function
  $$z \mapsto \dim \coh{n}{W_z}{\mathcal{G}_z}$$
  is locally constant. Conversely, if this function is locally constant and $S$
  is a reduced space, then $\mathcal{G}$ is cohomologically flat in dimension $n$
  over $S$. In particular, the sheaf $R^n g_*(\mathcal{G})$ is locally free.
\end{theorem}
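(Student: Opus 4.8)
The plan is to reduce the whole statement to linear algebra over $S$ by means of a single complex that universally computes cohomology. First I would invoke Grauert's direct image theorem, so that all $R^q g_*(\mathcal{G})$ are coherent, and then — after shrinking $S$ around a fixed point $s$ — produce a bounded complex of finitely generated free $\shf{S}$-modules
$$0 \ra L^0 \xra{d^0} L^1 \xra{d^1} \cdots \xra{d^{m-1}} L^m \ra 0$$
with the base-change property: for every morphism $T\ra S$ one has $R^q g_{T*}(\mathcal{G}_T)\iso \mathcal{H}^q(L^\bullet\otimes_{\shf{S}}\shf{T})$, and in particular, taking $T=\{z\}$ the reduced point, $\coh{q}{M_z}{\mathcal{G}_z}\iso \mathcal{H}^q(L^\bullet(z))$, where $L^\bullet(z)$ is the complex of $\C$-vector spaces obtained by reducing the $d^q$ modulo the maximal ideal at $z$. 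The existence of such a representing complex is the analytic base-change formalism (Grauert, Forster--Knorr, Kiehl--Verdier), and I expect it to be the genuinely hard, technical heart of the argument; everything afterwards is formal.

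Granting the complex, write $\rho^q(z):=\mathrm{rk}\, d^q(z)$ for the rank of the reduced boundary map. Since $\mathcal{H}^q(L^\bullet(z))=\ker d^q(z)/\ima d^{q-1}(z)$, one has
$$\dim \coh{q}{M_z}{\mathcal{G}_z}=\dim L^q-\rho^q(z)-\rho^{q-1}(z)\,.$$
Each $\rho^q$ is lower semicontinuous, because the nonvanishing of a $k\times k$ minor (a holomorphic function on $S$) is an open condition; hence the right-hand side is upper semicontinuous, which already records the semicontinuity underlying the theorem.

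For the forward implication I would feed cohomological flatness in dimension $n$ into the cohomology-and-base-change criterion attached to $L^\bullet$: surjectivity of the base-change map in degree $n$ forces it to be an isomorphism on a neighbourhood of $s$, and the additional surjectivity in degree $n-1$ forces $R^n g_*(\mathcal{G})$ to be locally free near $s$ with base change valid in degree $n$. Consequently $\dim\coh{n}{M_z}{\mathcal{G}_z}=\mathrm{rk}_z R^n g_*(\mathcal{G})$ is locally constant, as claimed.

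For the converse, assume $S$ reduced and $z\mapsto\dim\coh{n}{M_z}{\mathcal{G}_z}$ locally constant near $s$. The displayed identity then makes $\rho^n(z)+\rho^{n-1}(z)$ constant; since each $\rho^q$ is lower semicontinuous and thus $\geq\rho^q(s)$ near $s$, a sum of two such functions being constant forces each of $\rho^n$ and $\rho^{n-1}$ to be \emph{individually} locally constant. Here reducedness becomes indispensable: over a reduced base, local constancy of the rank of a morphism of free sheaves makes its image, kernel and cokernel locally free and compatible with base change (this fails over a fat point). Applying this to $d^{n-1}$ and $d^n$ exhibits $\ima d^{n-1}$ as a subbundle of the subbundle $\ker d^n$, so $\mathcal{H}^n(L^\bullet)=R^n g_*(\mathcal{G})$ is locally free and the base-change maps in degrees $n-1$ and $n$ are isomorphisms — precisely cohomological flatness in dimension $n$, with the local freeness of $R^n g_*(\mathcal{G})$ as an immediate byproduct.
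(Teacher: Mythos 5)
Your outline is correct, but note that the paper does not prove this statement at all: it is quoted as Grauert's continuity theorem with a citation to B\u{a}nic\u{a}--St\u{a}n\u{a}\c{s}il\u{a}, so there is no in-paper proof to compare against. What you have written is precisely the standard argument from that source (and from Grauert/Forster--Knorr/Kiehl--Verdier): reduce to a bounded complex $L^\bullet$ of finite free $\shf{S}$-modules computing all direct images universally, observe $\dim H^n(M_z,\mathcal{G}_z)=\dim L^n-\rho^n(z)-\rho^{n-1}(z)$ with each $\rho^q$ lower semicontinuous, get the forward direction from cohomology-and-base-change, and get the converse from the fact that over a \emph{reduced} base a constant sum of lower semicontinuous ranks forces each rank to be constant, whence kernels, images and $\mathcal{H}^n(L^\bullet)=R^ng_*(\mathcal{G})$ are locally free and compatible with base change. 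The only caveat is the one you already flag: the existence of the representing complex with the universal base-change property is itself the deep analytic input, so your ``proof'' is really a correct reduction of the theorem to that result rather than a self-contained argument --- which is entirely consistent with how the paper uses the statement.
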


Combining Theorem \ref{thm:representability-1}, Remark \ref{cohplat0} and Theorem
\ref{continuity}, we obtain:
\begin{corollary}
  \label{thm:locallyconstant}
  If the map $\Pic^c(X) \ni l \mapsto h^0(X_l,\resto{\poincare^c}{l})$ is constant,
  then the functor $\Picaff_{X,x_0}^c$ is represented by a linear space over
  $\Pic^c(X)$.
\end{corollary}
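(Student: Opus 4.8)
The plan is to deduce the statement by chaining the cited representability results through the comparison isomorphism of Corollary \ref{cor:maincor}. By part (2) of that corollary, it suffices to exhibit a linear space $u : V \to \Pic^c(X)$ representing the Bingener functor $B^c_{X,x_0}$; the representability of $\Picaff^c_{X,x_0}$ then follows automatically. As recorded in the discussion preceding the statement, $B^c_{X,x_0}$ is exactly Bingener's functor $F^{\poincare^c}_1$ attached to the data $M = \Pic^c(X) \times X$, $S = \Pic^c(X)$, $f = \pr_1$, $\mathcal{F} = \poincare^c$, and $p = 1$. Since $X$ is compact, the structural hypotheses of Theorem \ref{thm:representability-1} (separatedness, $S$-flatness, properness of support) are all satisfied, so the only thing left to check is its surjectivity hypothesis.

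First I would unwind that hypothesis in the case $p = 1$. Here $R^{p-1}f_* = f_*$, so the required condition is surjectivity of $f_*(\poincare^c)_s \to H^0(M_s, (\poincare^c)_s)$ for every $s \in S$, which by Remark \ref{cohplat0} is precisely cohomological flatness of $\poincare^c$ in dimension $0$ over $\Pic^c(X)$. Identifying the fiber over $s = l$ with $X$ and $(\poincare^c)_s$ with $\resto{\poincare^c}{l}$, the relevant fiber dimension is $h^0(X_l, \resto{\poincare^c}{l})$. I would then invoke the converse direction of Grauert's Continuity Theorem \ref{continuity}: the hypothesis of the corollary makes $l \mapsto h^0(X_l, \resto{\poincare^c}{l})$ constant, hence locally constant, and this is enough to force cohomological flatness in dimension $0$ provided the base is reduced. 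The one non-formal input is exactly this reducedness: $\Pic^c(X)$ is a coset of $\Pic^0(X) = H^1(X,\mathcal{O}_X)/H^1(X,\Z)$ inside the complex Lie group $\Pic(X)$, hence a complex manifold and in particular reduced, so the converse of Theorem \ref{continuity} applies.

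With cohomological flatness in dimension $0$ in hand, the surjectivity hypothesis of Theorem \ref{thm:representability-1} is met, so $B^c_{X,x_0} = F^{\poincare^c}_1$ is represented by a linear fiber space over $\Pic^c(X)$; transporting this back through Corollary \ref{cor:maincor}(2) shows $\Picaff^c_{X,x_0}$ is represented by the same complex space, carrying the linear space structure over $\Pic^c(X)$ furnished by Bingener's theorem. I expect the main obstacle to be conceptual rather than computational: the whole argument rests on the reducedness of the base, since Grauert's converse genuinely fails over non-reduced $S$, and everything else is a formal assembly of the quoted theorems.
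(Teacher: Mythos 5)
Your proposal is correct and follows exactly the route the paper intends: reduce via Corollary \ref{cor:maincor} to representing $B^c_{X,x_0}=F^{\poincare^c}_1$, translate Bingener's surjectivity hypothesis for $p=1$ into cohomological flatness in dimension $0$ via Remark \ref{cohplat0}, and obtain that flatness from the converse direction of the Continuity Theorem \ref{continuity} using the constancy of $l\mapsto h^0(X_l,\resto{\poincare^c}{l})$. Your explicit observation that $\Pic^c(X)$ is reduced (as a coset in the complex Lie group $\Pic(X)$) is a detail the paper leaves implicit but is indeed the hypothesis that makes Grauert's converse applicable.
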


Before we continue discussing representability, let us look at a simple example,
a Riemann surface of genus 1. 
\begin{example}
  Let $X$ be a compact Riemann surface of genus $1$ and $c\neq 0$. The map
  $$\Pic^c(X)\ra \Z,\ l\mapsto \dim \coh{0}{X}{\resto{\poincare^c}{l}}$$
  is constant.\\
This follows from the fact that $h^{0}(\resto{\poincare}{l})=0$ if $c<0$ and
$h^1(\resto{\poincare}{l})=0$ if $c>0$ as well as Riemann-Roch
$$h^0(\resto{\poincare}{l}) = h^{1}(\resto{\poincare}{l}) +
\deg\resto{\poincare}{l} - g +1 = h^{1}(\resto{\poincare}{l}) +
\deg\resto{\poincare}{l}$$

On the other hand, if $c=0$, $l\ra \dim\coh{0}{X}{\resto{\poincare}{l}}$ is not
constant since $h^1(\resto{\poincare}{l})=1$ if $\resto{\poincare}{l}\iso \shf{X}$
and $h^1(\resto{\poincare}{l})=0$ otherwise. 
\end{example}

This shows that Theorem
\ref{thm:locallyconstant} does not apply in this case.
On the other hand, our Theorem \ref{thm:main-theorem}, proved later in this
section, shows that the  
functor $B^0_{X,x_0}$ cannot be represented by any complex space over $\Pic^0(X)$.

We are ready to prove the converse of Theorem \ref{thm:representability-1} in
the case that $S$ is reduced. That means we want to prove
the last part of the following theorem.

\begin{theorem}
 \label{thm:proof-part-2} 
  Let $f:X\ra S$ be a separated holomorphic map and $\mathcal{F}$ an $S$-flat
  coherent sheaf on $X$ whose support is proper over $S$.
  If $\mathcal{F}$ is cohomologically flat in dimension $0$ then the
   functor $F: (\An/S)^{op}\ra \Set$ given by
  $$F(T\ra S)=\coh{0}{T}{R^1{f_T}_*\mathcal{F}_T}$$
   is representable. Conversely, if $F$
    is representable and $S$ is reduced, then $\mathcal{F}$ is cohomologically flat in
    dimension $0$.
\end{theorem}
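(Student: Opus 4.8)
The plan is to prove the two implications separately; the forward direction is immediate from Bingener, and the real content lies in the converse. For the forward direction, note that by Remark \ref{cohplat0} cohomological flatness in dimension $0$ is exactly the surjectivity of $f_*(\mathcal{F})_s \ra \coh{0}{X_s}{\mathcal{F}_s}$ for every $s\in S$, which for $p=1$ is precisely the hypothesis ``$R^{p-1}f_*(\mathcal{F})_s\ra \coh{p-1}{X_s}{\mathcal{F}_s}$ surjective'' of Bingener's Theorem \ref{thm:representability-1}. Hence $F=F_1^{\mathcal{F}}$ is represented by a linear fibre space over $S$, and nothing more is needed. For the converse I would assume $F$ is representable by $u\colon V\ra S$ with $S$ reduced and argue by contradiction: if $\mathcal{F}$ is not cohomologically flat in dimension $0$, then by the contrapositive of the continuity Theorem \ref{continuity} over the reduced base $S$ (case $n=0$, together with Remark \ref{cohplat0}) the function $s\mapsto h^0(X_s,\mathcal{F}_s)$ is \emph{not} locally constant; being upper semicontinuous, it jumps up at some point $s_0$.

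The first step is to cut down to a curve. I would choose a holomorphic disc $g\colon \Delta\ra S$ with $g(0)=s_0$ whose image is not contained in the proper analytic jump locus $\{s: h^0(X_s,\mathcal{F}_s)\geq h^0(X_{s_0},\mathcal{F}_{s_0})\}$, so that $h^0$ drops off the origin; this is exactly where reducedness of $S$ is used, to guarantee that such an arc exists. Since representability is preserved under the base change $g$ (the functor pulls back to one represented by $V\times_S\Delta\ra\Delta$, which is the analogous $F$-functor for the family $\mathcal{F}_\Delta$ on $X_\Delta\ra\Delta$), I may assume $S=\Delta$ and that $h^0(X_t,\mathcal{F}_t)$ jumps at $t=0$.

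Over the disc the key is to convert this jump into honest torsion. By the Grauert comparison theorem there is, after shrinking, a finite complex of free $\shf{\Delta}$-modules $L^0\ra L^1\ra L^2\ra\cdots$ computing both the direct images $R^q f_{\Delta *}\mathcal{F}_\Delta$ and their fibres $\coh{q}{X_t}{\mathcal{F}_t}$. Localising at $0$, the ring $\shf{\Delta,0}$ is a discrete valuation ring, so this complex splits into elementary pieces $\shf{\Delta,0}\xra{t^a}\shf{\Delta,0}$ and free terms. A piece placed in degrees $(0,1)$ contributes a torsion summand $\shf{\Delta,0}/t^a$ to $\mathcal{H}^1=R^1f_{\Delta *}\mathcal{F}_\Delta$ while simultaneously adding $1$ to $h^0$ on the special fibre and $0$ on the general fibre; hence the drop of $h^0$ at $0$ forces at least one such piece, i.e. forces $R^1f_{\Delta *}\mathcal{F}_\Delta$ to carry a nonzero subsheaf supported at $0$. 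After shrinking $\Delta$, a nonzero element of this subsheaf gives a section $\rho\in \coh{0}{\Delta}{R^1f_{\Delta *}\mathcal{F}_\Delta}=F(\Delta)$ that is annihilated by some power of $t$ and therefore restricts to $0$ on the punctured disc $\Delta^*=\Delta\setminus\{0\}$.

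Finally I would close with a separatedness argument. The two distinct elements $\rho\neq 0$ of $F(\Delta)$ correspond, via representability, to two sections $\Delta\ra V$ of $V\ra\Delta$ which, by the previous step, agree after restriction along the open immersion $\Delta^*\hookrightarrow\Delta$, i.e. coincide on the dense open $\Delta^*$. Since $V$ is a complex space it is Hausdorff, so $V\ra\Delta$ is separated and the equaliser of the two sections is a closed complex subspace of the reduced space $\Delta$ with support all of $\Delta$; such a subspace must be $\Delta$ itself, forcing $\rho=0$ and contradicting $\rho\neq 0$. This yields cohomological flatness in dimension $0$. I expect the main obstacle to be the middle step, namely making rigorous the passage from a jump of $h^0$ to a genuine torsion subsheaf of $R^1f_*\mathcal{F}$; this is precisely why I reduce to a disc and exploit the structure of bounded free complexes over a discrete valuation ring, after which the ``phantom'' torsion section and the separatedness of the representing space finish the proof cleanly.
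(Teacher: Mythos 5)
Your proof is correct, and its overall architecture matches the paper's: the forward direction is exactly Bingener's Theorem~\ref{thm:representability-1} read through Remark~\ref{cohplat0}, and the converse is proved by restricting to a disc through a jump point of $h^0$ and showing that the jump is incompatible with representability via a ``phantom section'' killed by the identity theorem. Your final step is precisely the paper's Proposition~\ref{prop:torsion-free}: a section of $R^1{f_T}_*\mathcal{F}_T$ vanishing on a dense open subset corresponds, under the representation, to a morphism into $V$ agreeing with the zero section there, hence everywhere. Where you genuinely diverge is the middle step linking the jump of $h^0$ to torsion in $R^1f_{\Delta*}\mathcal{F}_\Delta$. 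The paper runs the implication in the other direction (``torsion-free $\Rightarrow$ no jump''): it tensors $\mathcal{G}=\mathcal{F}_D$ with the sequence $0\ra\mathcal{I}_0\ra\shf{D}\ra\shf{\{0\}}\ra 0$, identifies the connecting map $\phi$ with multiplication by the coordinate $\zeta$ on $R^1f_{D*}\mathcal{G}$, deduces injectivity of $\phi$ from torsion-freeness, hence surjectivity of $f_{D*}\mathcal{G}\ra f_{D*}(\mathcal{G}\otimes f_D^*\shf{\{0\}})$, and then invokes the base-change criterion of B\u{a}nic\u{a}--St\u{a}n\u{a}\c{s}il\u{a} to get cohomological flatness at $0$ and constancy of $h^0$. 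You instead invoke the Forster--Knorr/Grauert finite free complex $L^\bullet$ computing all $R^qf_*$ and all fibre cohomologies compatibly with base change, and decompose it over the discrete valuation ring $\shf{\Delta,0}$ into elementary pieces; a jump of $h^0$ at $0$ forces a piece $\shf{\Delta,0}\xra{t^a}\shf{\Delta,0}$ in degrees $(0,1)$ (here you should note that $L^\bullet$ may be taken concentrated in degrees $\geq 0$, so the jump cannot be absorbed elsewhere), and that piece produces the torsion in $\mathcal{H}^1$ directly. Both routes are sound; the paper's avoids the comparison complex at the price of quoting the cohomological-flatness criterion, while yours makes the equivalence ``jump at $0$ $\Leftrightarrow$ torsion in $R^1$ at $0$'' completely transparent over the DVR. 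The only point you leave implicit that the paper proves in detail is the existence of the test disc $h:D\ra S$ with $h(0)=z_0$ and $h(D\setminus\{0\})$ avoiding the jump locus; this is a genuine (if standard) lemma on reduced complex spaces, which the paper establishes by cutting down with a system of parameters and normalising the resulting curve germ, and it is the one place where reducedness and irreducibility of the base are actually used.
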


\begin{proof}

The implication ``$\mathcal{F}$ is cohomologically flat in dimension $0$" $\Rightarrow$  $F$ is representable is a special case of Bingener's theorem Theorem \ref{thm:representability-1} taking into account Remark \ref{cohplat0}.

We prove now that (assuming $S$ reduced), if $F$ is representable, then $\mathcal{F}$ is cohomologically flat in dimension $0$.

 Since $S$ is reduced, it suffices to prove
that the map $s\mapsto h^0(X_s,{\cal F}_s)$ is locally constant on $S$ by the
Continuity Theorem \ref{continuity}. We will prove that this map is constant on any irreducible component $S_0$ of $S$. Put
$$k_0:=\min\{h^0(X_s,{\cal F}_s)|\ s\in S_0\}\,,
$$
and let   $U_0\subseteq S_0$ be the
subset where this  minimal value is obtained. By Grauert's semi-continuity theorem, its complement
$Z_0:=S_0\setminus U_0$ is an analytic set of $S_0$. We will prove by reductio ad absurdum that $Z_0$ is empty.
Suppose this is not
the case, and let $z_0\in Z_0$. 
\vspace{1mm}\\

Denoting by $D$ the standard disk in $\C$ we note that 
\begin{remark}
  Let $Y$ be an irreducible complex space,  $Z\subsetneq Y$ be an analytic
  subset, and $z_0\in Z$. There exists a morphism  $h:D\to Y$ such that
  $h(0)=z_0$ and $h(D\setminus\{0\})\subset Y\setminus Z$. 
\end{remark}
\begin{proof} Let $n$ be dimension of $Y$ and ${\cal I}\subset {\cal O}_Y$ be
  the ideal sheaf of $Z$. Since the germs $(Z,z_0)$, $(Y,z_0)$ are different
  there exists $\psi\in {\cal I}_{z_0}\setminus \{0\}$. Note that, since the
  local ring ${\cal O}_{z_0}$ is an integral domain, $\psi$ is not zero divisor
  in this ring, so $\dim({\cal O}_{Y,z_0}/(\psi))=n-1$ by \cite[Corollary
  11.8]{atiyah}. Let $(H,z_0)$ be the germ defined by the principal ideal
  $(\psi)$, and note the inclusion of germs $(Z,z_0)\subset (H,z_0)$. 

 Let $\mathfrak{m}_0\subset {\cal O}_{z_0}$, $\bar{\mathfrak{m}}_0\subset  {\cal O}_{Y,z_0}/(\psi)$ be the maximal ideals of the respective local rings. By \cite[Theorem 11.14]{atiyah} it follows that there exist $\varphi_1, \dots,\varphi_{n-1}\in \bar{\mathfrak{m}}_0$ such that 
$$\sqrt{(\varphi_1, \dots,\varphi_{n-1})}=\bar{\mathfrak{m}}_0\,.$$
Let $\psi_i$ be a lift of $\varphi_i$ via the epimorphism $\mathfrak{m}_0\to
\bar{\mathfrak{m}}_0$, and note that the dimension of the local ring ${\cal
  O}_{Y,z_0}/(\psi_1,\dots,\psi_{n-1})$ is 1. Therefore the dimension of   the
germ  $(\Gamma,z_0)$ defined by the ideal $(\psi_1,\dots,\psi_{n-1})$ is 1. The
intersection of the germs $(H,z_0)$, $(\Gamma,z_0)$ reduces to $\{z_0\}$. Let
$g: (\tilde\Gamma,x_0)\to (\Gamma,z_0)$ be a normalisation of  the reduction of
$(\Gamma,z_0)$. It suffices to put $h:=g\circ u$, where $u:(D,0)\to
(\tilde\Gamma,x_0)$ is a biholomorphic parametrisation of a sufficiently small
neighborhood of $x_0$. 
\end{proof}

Let $h:D\ra S$ be a morphism as in the remark above.  Since $z_0\in Z_0$ we have 
\begin{equation}\label{larger}
h^0(X_{z_0},{\cal F}_{z_0})>k_0\,.	
\end{equation}
Regarding $D$ as a complex space over $S$ via $h$, let  $\mathcal{G}:=\ff_D$. Denote by ${\cal I}_{0}$   the
  ideal sheaf of the singleton $\{0\}\subset D$.   Since ${\cal F}$ is flat over $S$, it follows that ${\cal G}$ is flat over $D$, so the  exact sequence
  $$0 \ra \mathfrak{\cal I}_{0} \ra \shf{D} \ra \shf{\{0\}} \ra 0$$
  on $D$ gives an exact sequence of coherent sheaves on $X_D$:
  $$0 \ra \mathcal{G} \otimes  f_D^*({\cal I}_{0}) \ra \mathcal{G}  
  \ra \mathcal{G} \otimes  f_D^*(\shf{\{0\}})\ra 0\,.$$
  The associated pushforward long exact sequence on $D$ reads:
  \begin{center}
    
    \begin{tikzcd}
      0 \rar & f_{D_*}(\mathcal{G} \otimes  f_D^*({\cal I}_{0})) \rar &
      f_{D_*}(\mathcal{G})  \rar & f_{D_*}(\mathcal{G} \otimes
      f_D^*(\shf{\{0\}})) \\
      \rar &   R^1f_{D_*}(\mathcal{G} \otimes
      f_D^*({\cal I}_{0})) \rar{\phi} &  R^1f_{D_*}(\mathcal{G}) 
    \end{tikzcd}
  \end{center}
%
%
Denoting by $\zeta\in {\cal O}(D)$ the holomorphic map given by the embedding $D\hookrightarrow\C$, we see that ${\cal I}_{0}=\zeta{\cal O}_D$, in particular ${\cal I}_{0}$ is free of rank 1, and multiplication with $\zeta$ defines a sheaf isomorphism  ${\cal O}_D\textmap{\simeq} {\cal I}_{0}$, hence a sheaf isomorphism
$$R^1f_{D_*}(\mathcal{G})=R^1f_{D_*}(\mathcal{G}\otimes f_D^*({\cal O}_D))\textmap{\tilde \zeta\simeq } R^1f_{D_*}(\mathcal{G} \otimes  f_D^*({\cal I}_{0}))\,.
$$ 
The composition $\phi\circ\tilde\eta:R^1f_{D_*}(\mathcal{G})\to R^1f_{D_*}(\mathcal{G})$  is given by multiplication with $\zeta$ on this ${\cal O}_D$-module. 
%
%
By Proposition ~\ref{prop:torsion-free} proved below we know that $R^1f_{D_*}(\mathcal{G})$ is torsion free on $D$, so this sheaf morphism is injective. Since $\tilde\zeta$ is an isomorphism, it follows that $\phi$ is injective.

This implies that the canonical morphism $f_{D_*}(\mathcal{G})\ra
f_{D_*}(\mathcal{G}\otimes f_D^*(\shf{\{0\}})$ 
is an epimorphism, in particular the canonical morphism of ${\cal O}_{D,0}$-modules
$$f_{D_*}(\mathcal{G})_0\to f_{D_*}(\mathcal{G}/\mathfrak{m}_{0}\mathcal{G})_{0}$$
is surjective. By \cite[III. Theorem 3.4, Corollary 3.7]{banica} this is equivalent to $f$
being cohomologically flat in dimension $0$ at $0$. On the other hand the map $z\mapsto h^0({\cal G}_z)$ is constant $k$ on $D\setminus\{0\}$, so by Grauert's continuity theorem \cite[III. Theorem 4;12 (ii)]{banica} it follows that ${\cal G}$ is cohomologically flat in dimension 0 on $D\setminus\{0\}$. Therefore ${\cal G}$ is cohomologically flat on $D$, so again by  Grauert's continuity theorem, it follows that map  
 $$D\ni z\mapsto h^0((X_D)_z,{\cal G}_z)=h^0(X_{f(z)},{\cal G}_{f(z)})
 $$
 is constant on $D$, which contradicts (\ref{larger}).
 \end{proof}

Finally we apply the developped methods to the case of affine line bundles.
\begin{theorem}
  \label{thm:main-theorem}
  The functor $\Picaff_{X,x_0}^c$ is representable by a linear fiber space over $\Pic^c(X)$ if and only if the function $l \ra
  h^0(X_l,\resto{\poincare^c}{l})$ is locally constant.
\end{theorem}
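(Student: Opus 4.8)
The plan is to obtain this theorem as a specialization of results already in hand, namely the comparison Corollary~\ref{cor:maincor}, the converse-of-Bingener statement Theorem~\ref{thm:proof-part-2}, and Grauert's Continuity Theorem~\ref{continuity}, all applied in the concrete situation $S=\Pic^c(X)$, $M=\Pic^c(X)\times X$, $f=\pr_1$, $\mathcal{F}=\poincare^c$ and $p=1$. The crucial geometric fact I would invoke throughout is that $\Pic^c(X)$ is a \emph{reduced and connected} complex space: it is a coset of the identity component $\Pic^0(X)=H^1(X,\mathcal{O})/H^1(X,\Z)$ of the abelian complex Lie group $\Pic(X)$, hence a connected complex manifold. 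In particular a locally constant function on $\Pic^c(X)$ is automatically constant, and the reducedness hypotheses in the converse parts of Theorems~\ref{thm:proof-part-2} and~\ref{continuity} are met. Since $X$ is compact, $\poincare^c$ is moreover flat over $\Pic^c(X)$, coherent, and with support proper over $\Pic^c(X)$, so all standing assumptions of the cited theorems hold and $B^c_{X,x_0}$ is precisely Bingener's functor $F^{\poincare^c}_1$.

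For the implication ``locally constant $\Rightarrow$ representable by a linear fibre space'', connectedness of $\Pic^c(X)$ promotes local constancy of $l\mapsto h^0(X_l,\resto{\poincare^c}{l})$ to constancy, and Corollary~\ref{thm:locallyconstant} then applies verbatim, yielding a linear fibre space over $\Pic^c(X)$ that represents $\Picaff_{X,x_0}^c$.

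For the converse, I would start from the assumption that $\Picaff_{X,x_0}^c$ is representable by a linear fibre space, hence in particular by a complex space. By Corollary~\ref{cor:maincor}(2) this is equivalent to the representability of $B^c_{X,x_0}=F^{\poincare^c}_1$ in $\An/\Pic^c(X)$. Because the base $\Pic^c(X)$ is reduced, the converse half of Theorem~\ref{thm:proof-part-2} then forces $\poincare^c$ to be cohomologically flat in dimension $0$. Finally the forward direction of the Continuity Theorem~\ref{continuity}, applied with $n=0$, shows that $l\mapsto\dim H^0(X_l,\resto{\poincare^c}{l})$ is locally constant, closing the equivalence.

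The substantive mathematics has already been carried out in Theorem~\ref{thm:proof-part-2}, whose converse direction is the genuinely new input; the present statement is essentially its translation through the functor isomorphism of Corollary~\ref{cor:maincor}. Accordingly the only points left to verify are bookkeeping: that representability of $\Picaff_{X,x_0}^c$ in $\An$ corresponds correctly to representability of $B^c_{X,x_0}$ in the sliced category $\An/\Pic^c(X)$, that $\poincare^c$ genuinely satisfies the flatness and proper-support hypotheses over the reduced base, and the elementary promotion of local constancy to constancy using connectedness of $\Pic^c(X)$. I do not expect a serious obstacle beyond ensuring these hypotheses line up precisely; the real difficulty was concentrated entirely in the earlier converse-of-Bingener argument.
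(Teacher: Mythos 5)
Your proposal is correct and follows exactly the route the paper intends: Theorem~\ref{thm:main-theorem} is stated without a separate proof precisely because it is the assembly of Corollary~\ref{cor:maincor}, Theorem~\ref{thm:proof-part-2} (whose converse needs the reducedness of $\Pic^c(X)$, as you note), and the Continuity Theorem~\ref{continuity} applied to $\mathcal{F}=\poincare^c$ over $S=\Pic^c(X)$. Your explicit verification that $\Pic^c(X)$ is a reduced, connected coset of $\Pic^0(X)$ (so that local constancy upgrades to constancy and Corollary~\ref{thm:locallyconstant} applies) is exactly the bookkeeping the paper leaves implicit.
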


We still need to add the following Proposition:
\begin{proposition}
  \label{prop:torsion-free}
  Let $f:X\ra S$ be a separated holomorphic map and $\mathcal{F}$ an $S$-flat
  coherent sheaf on $X$ whose support is proper over $S$.
  Suppose that the functor $F: (\An/S)^{op}\to \Set$ 
  $$F(T\ra S)=\coh{0}{T}{R^1{f_T}_*\mathcal{F}_T}\,,$$
   is representable. Let $g:T\ra S$ with $T$ locally irreducible. Then the
   $\shf{T}$-sheaf $R^1{f_T}_*\mathcal{F}_{X_T}$ is torsion-free.
 \end{proposition}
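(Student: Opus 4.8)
The plan is to exploit representability to reinterpret the sheaf $\mathcal{G} := R^1{f_T}_*\mathcal{F}_{X_T}$ as a sheaf of holomorphic sections of an honest complex space over $T$, and then to kill torsion by an identity principle. Since torsion-freeness is local on $T$, I may shrink $T$ and assume it is irreducible; recall that local irreducibility in the sense used above means the local rings $\shf{T,t}$ are integral domains (as in the Remark preceding this Proposition), so in particular $T$ is reduced. By representability of $F$ over $S$ by some $V\to S$, the base change $W := V\times_S T$ represents the restriction of $F$ to $\An/T$: for $g':U\to T$ one has $\Hom_T(U,W)=\Hom_S(U,V)=F(U\to S)=H^0(U,R^1{f_U}_*\mathcal{F}_{X_U})$, and this last group is exactly $\mathcal{G}(U)$ because higher direct images commute with the open restriction $X_U=f_T^{-1}(U)$ and because $X_U=U\times_T X_T$. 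Letting $U$ range over the opens of $T$ yields a natural isomorphism of sheaves identifying $\mathcal{G}$ with the sheaf of $T$-morphisms (sections) into $W$.

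Next I would set up the identity principle. Let $\sigma\in\mathcal{G}(U)$ and let $s\in\shf{T}(U)$ be a non-zero-divisor with $s\sigma=0$. Since $T$ is irreducible and reduced, the zero locus $Z:=\{s=0\}$ is a nowhere-dense analytic subset and $s$ is invertible on the dense open set $U\setminus Z$; hence $\sigma|_{U\setminus Z}=s^{-1}(s\sigma)|_{U\setminus Z}=0$. Under the identification above, $\sigma$ corresponds to a $T$-morphism $\psi_\sigma:U\to W$, and the vanishing of $\sigma$ on $U\setminus Z$ says precisely that $\psi_\sigma$ coincides there with the zero section of $W\to T$, i.e. with the morphism representing $0\in F$. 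Here I would check carefully that $0$ corresponds, naturally in the test space, to the base change of the canonical zero section $\zeta_0:S\to V$ (the image of $0\in F(\id_S)$): then ``$\psi_\sigma$ agrees with $0$ on $U\setminus Z$'' really means ``$\psi_\sigma$ agrees with the fixed morphism $\zeta_0\circ(U\to S)$ on $U\setminus Z$''.

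Finally I would conclude by the identity theorem for holomorphic maps: two morphisms from a reduced irreducible complex space $U$ into a complex space $W$ that agree on the dense open subset $U\setminus Z$ must agree on all of $U$. Therefore $\psi_\sigma$ equals the zero section, i.e. $\sigma=0$, which shows that $\mathcal{G}=R^1{f_T}_*\mathcal{F}_{X_T}$ is torsion-free.

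The step I expect to be the main obstacle is the last one, namely transporting the purely algebraic fact ``$\sigma$ vanishes away from $Z$'' into a geometric statement to which the identity principle applies. This is exactly where both hypotheses are consumed: representability converts sections of $\mathcal{G}$ into morphisms into a genuine complex space $W$, and local irreducibility (with the attendant reducedness and integrality of $\shf{T,t}$) guarantees simultaneously that $U\setminus Z$ is dense and that the identity theorem is available. The delicate point requiring care is the compatibility of the zero element of the vector-space-valued functor $F$ with the base-changed zero section, so that the comparison is between two fixed morphisms rather than with a moving target; once this naturality is pinned down, the rest is a direct application of the identity principle.
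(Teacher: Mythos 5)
Your proof is correct and follows essentially the same route as the paper: use representability to identify sections of $R^1{f_T}_*\mathcal{F}_{X_T}$ with $T$-morphisms into the (base-changed) representing space, observe that a torsion section agrees with the pullback of the canonical zero section on a dense open subset, and conclude by the identity theorem. If anything, you make explicit a step the paper leaves implicit, namely the reduction from ``$s\sigma=0$ for a non-zero-divisor $s$'' to ``$\sigma$ vanishes on the dense complement of $\{s=0\}$'', which is where reducedness and irreducibility of $T$ are used.
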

 %

\begin{proof}
  Fix a representation $\varphi:F \xra{\simeq} h_Y$ with $Y\xra{h} S$.
  For $g:T\ra S$, let $0_g\in F(g)$ be the trivial element and let
  $O_g=\varphi(0_g)\in \Hom_S(T,Y)$. We call this the zero section over $g$.
  Clearly, if $O_{\id}:=O_{\id_S}:S\ra Y$ is the zero section over $\id_S$, for
  every $g:T\ra S$ we have $O_g=g^* O_\id$.
  Suppose now that $T$ is irreducible and $\sigma \in \coh{0}{T}{\ff_g}$.
  Suppose $U\subseteq T$ is an open subset such that $\resto{\sigma}{U}=0$.
  Then:
  $$\resto{\sigma}{U}=0 \Leftrightarrow
  \resto{\varphi(\sigma)}{U}=\resto{O_g}{U}\,.$$
  Using the identity theorem, we can deduce that $\varphi(\sigma)=O_g$ and
  therefore $\sigma=0_g$.
\end{proof}


\section{Appendix}
\subsection[Appendix: Framing  a principal bundle]{Appendix 1}\label{appendix}
Some technical results needed in the later chapters are presented in this
section. Let $G$ be a complex Lie group,  and $X$ be a complex space. We denote by ${\cal G}$ the sheaf of  $G$-valued holomorphic morphisms defined on open sets of $X$. The cohomology set $H^1(X,{\cal G})$ can be identified with the set of isomorphism classes of $G$-principal bundles on $X$. Let now $\chi:G\to \C^*$ be a character,  and $Z\subset X$ be closed complex subspace. 
\begin{definition}
  Let ${\cal P}$ be a  principal $G$-bundle on $X$. 
  A $(Z,\chi)$-\emph{frame} of ${\cal P}$   is a trivialisation
  $\tau:\resto{{\cal P}\times_\chi \C}{Z} \xra{\simeq} Z\times \C$. A $(Z,\chi)$-framed $G$-bundle on $X$ is a  principal $G$-bundle endowed with a $(Z,\chi)$-\emph{frame}.
\end{definition}

Let $\lb^{\cal P}$ be the sheaf of sections of ${\cal P}\times_\chi \C$, which is given by
$$\lb^{\cal P}(U)=\big\{\varphi\in {\cal O}({\cal P}_U)|\ \varphi\circ R_g=\chi(g)^{-1}\varphi\ \forall g\in G\}\,.
$$
Here we denoted by $R_g$ the right translation associated with $g$. With these notations we have
\begin{remark}
A $(Z,\chi)$-frame of ${\cal P}$ is equivalent to the data of a sheaf isomorphism
$\resto{\lb^{\cal P}}{Z}\xra{\iso} \shf{Z}$.
\end{remark}

To simplify the notation (assuming that   
$\chi$ is clear from the context) we will just write $Z$-frame of ${\cal P}$, or framing of ${\cal P}$ along $Z$. A $Z$-framed $G$-bundle over $X$ is  a pair $({\cal P},\tau)$ consisting of a holomorphic principal $G$-bundle on $X$, and a $Z$-frame of ${\cal P}$. One has an obvious notion of isomorphism between $Z$-framed $G$-bundles. We will denote by $\mathrm{Bun}_G(X,Z,\chi)$ the set of isomorphism classes of $Z$-framed $G$-bundles on $X$.

We define the subsheaf ${\cal G}_Z^\chi\subset {\cal G}$ by
$${\cal G}_Z^\chi(U):=\{f\in {\cal G}(U)|\ \resto{(\chi\circ f)}{U\cap Z}=1\}\,.
$$
The $G$-bundle ${\cal P}_\alpha$ associated with a \v{C}ech cocycle $\alpha\in Z^1({\cal U},{\cal G}_Z^\chi)$ (for an open covering ${\cal U}$ of $X$) comes with a line bundle isomorphism 
$$\tau_\alpha: \resto{{\cal P}_\alpha\times_\chi \C}{Z}\xra{\simeq} Z\times\C\,.$$
The point is the line bundle $\resto{{\cal P}_\alpha\times_\chi \C}{Z}$ is defined by the cocycle 
$$(\chi\circ\alpha)_Z\in Z^1({\cal U}\cap Z,{\cal G}_Z^\chi)$$
which is trivial. It is easy to see that the isomorphism class of the $Z$-framed bundle $(Z_\alpha,\tau_\alpha)$ depends only on the cohomology class of $\alpha$; so gets a well-defined map
$$F:H^1(X,{\cal G}_Z^\chi)\to \mathrm{Bun}_G(X,Z,\chi)
$$
\begin{proposition}\label{framez}
Suppose that $\chi$ has a right inverse, i.e. that there exists a group morphism $\sigma:\C^*\to G$ such that $\chi\circ\sigma=\id$. Then the map $F$ is bijective.\end{proposition}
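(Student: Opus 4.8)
The plan is to prove bijectivity of $F$ directly at the level of \v{C}ech cocycles, treating surjectivity and injectivity separately, with the right inverse $\sigma$ entering only in the surjectivity argument. Throughout I would work on a fixed open cover $\mathcal{U}=(U_i)$ of $X$, passing to common refinements as needed and using $H^1(X,\mathcal{G}_Z^\chi)=\varinjlim_{\mathcal{U}} H^1(\mathcal{U},\mathcal{G}_Z^\chi)$.

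For surjectivity I would start with a $(Z,\chi)$-framed bundle $(\mathcal{P},\tau)$ and represent $\mathcal{P}$ by a cocycle $\beta\in Z^1(\mathcal{U},\mathcal{G})$. The associated line bundle $\mathcal{P}\times_\chi\C$ has transition cocycle $\chi\circ\beta$, and $\tau$ is precisely a trivialisation of its restriction to $Z$; in cocycle terms this yields units $\mu_i\in\mathcal{O}^*(U_i\cap Z)$ with $(\chi\circ\beta_{ij})|_Z=\mu_i\mu_j^{-1}$. After refining $\mathcal{U}$ I would extend each $\mu_i$ to a unit $\tilde\mu_i\in\mathcal{O}^*(U_i)$ and then correct $\beta$ by the $0$-cochain $\sigma(\tilde\mu_i)\in\mathcal{G}(U_i)$, setting $\beta'_{ij}:=\sigma(\tilde\mu_i)^{-1}\beta_{ij}\sigma(\tilde\mu_j)$. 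This $\beta'$ is cohomologous to $\beta$ in $\mathcal{G}$, so it represents the same bundle, and since $\chi\circ\sigma=\id$ one computes $(\chi\circ\beta'_{ij})|_Z=\mu_i^{-1}(\chi\circ\beta_{ij})|_Z\,\mu_j=1$, so $\beta'\in Z^1(\mathcal{U},\mathcal{G}_Z^\chi)$. A final check that the correcting cochain carries the canonical frame $\tau_{\beta'}$ onto $\tau$ shows $F([\beta'])=(\mathcal{P},\tau)$.

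For injectivity, suppose $\alpha,\alpha'\in Z^1(\mathcal{U},\mathcal{G}_Z^\chi)$ give isomorphic framed bundles. A framed isomorphism is a $0$-cochain $g_i\in\mathcal{G}(U_i)$ with $\alpha'_{ij}=g_i\alpha_{ij}g_j^{-1}$ whose induced map on linearisations intertwines the canonical frames $\tau_\alpha,\tau_{\alpha'}$. Since both associated line bundles restrict to the literally trivial bundle on $Z$ (their cocycles are $\equiv 1$ there) and both frames are the tautological identifications, the intertwining condition reads $(\chi\circ g_i)|_Z=1$; that is, $g_i\in\mathcal{G}_Z^\chi(U_i)$. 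Hence $\alpha$ and $\alpha'$ are cohomologous in $\mathcal{G}_Z^\chi$, and $F$ is injective.

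The main obstacle is the single analytic input in the surjectivity step: extending the units $\mu_i$ from $U_i\cap Z$ to $U_i$. I would handle this through the short exact sequence of sheaves $1\to\mathcal{G}_Z^\chi\to\mathcal{G}\xrightarrow{r}i_*\mathcal{O}_Z^*\to 1$, where $r(f)=(\chi\circ f)|_Z$; its surjectivity holds stalkwise because $\mathcal{O}_{X,z}\to\mathcal{O}_{Z,z}$ is surjective and a germ with nonzero value extends to a germ of unit, which after refinement gives the extensions $\tilde\mu_i$. Equivalently, refining $\mathcal{U}$ so that each $U_i$ is Stein lets me invoke Cartan's extension theorem and then shrink away from the ($Z$-avoiding) analytic zero locus of the extension to make it nonvanishing. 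Here $\sigma$ is used only to lift the scalar corrections $\tilde\mu_i$ back into $\mathcal{G}$, and the identity $\chi\circ\sigma=\id$ is exactly what forces the corrected cocycle $\beta'$ to be trivial on $Z$; this is where the splitting hypothesis on $\chi$ is indispensable.
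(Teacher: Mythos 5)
Your proof is correct and takes essentially the same route as the paper: your cocycle correction $\beta'_{ij}=\sigma(\tilde\mu_i)^{-1}\beta_{ij}\sigma(\tilde\mu_j)$ is exactly the \v{C}ech form of the paper's argument, which twists local trivialisations by $\sigma$ of units (its operation $\zeta\theta$ on a sheaf of frame-compatible trivialisations) and likewise relies on the local surjectivity of ${\cal O}_X^*\to{\cal O}_Z^*$ to extend the frame data off $Z$, with $\chi\circ\sigma=\id$ forcing the corrected cocycle to lie in ${\cal G}_Z^\chi$. A minor point in your favour: you also write out the injectivity step, which the paper's printed proof only treats implicitly.
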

\begin{proof}
For the surjectivity, let $({\cal P},\tau)$ be a $Z$-framed bundle on $X$.
Any local trivialization $\theta:P_U\to U\times G$ induces a local
trivialisation 
$$\chi(\theta):P_U\times_\chi \C \ra U\times \C\,.$$
Define a sheaf of sets $\Tau$ on $X$ by
$$\Tau(U)=\{\theta:P_U\xra{\simeq} U\times G |\ \theta \text{ is a trivialisation}\}\,.$$
Note that for any $x\in X$  stalk ${\cal T}_x$ is a ${\cal G}_x$-torsor. Furthermore we define the subsheaf $\Tau'\subset \Tau$ of local trivialisations which are compatible with $\tau$:
$$\Tau'(U)=\{\theta \in \Tau(U) |\ \resto{\chi(\theta)}{U\cap
  Z}=\resto{\tau}{U\cap Z}\}\,.$$
%
For  $\theta \in \Tau(U)$ and $\zeta \in \shf{X}^*(U)$ we can define (using $\sigma$)   a trivialisation $\zeta\theta\in \Tau(U)$ by $\zeta\theta:=(\id_U, L_{\sigma(\zeta)})\circ \theta$, where $L_{\sigma(\zeta)}:U\times G\to   G$ is defined by
$$L_{\sigma(\zeta)}(u,g):= \sigma(\zeta(u))g\,.
$$
One has the identity
$$\chi(\zeta\theta)=\zeta(\chi(\theta))\,.
$$
Using this result, and the surjectivity of ${\cal O}^*_X\to {\cal O}^*_{Z,x}$
(for $x\in Z)$ it follows that for any $x\in Z$ the stalk $\Tau'_x$ is
non-empty. More precisely, this stalk is a ${\cal G}^\chi_{Z,x}$-torsor. For
ever $x\in Z$ let $U^x$ be an open neighbourhood of $x$, and $\theta^x\in {\cal
  T}'(U_x)$. Let also $(\theta^i:P_{U^i}\to U_i\times G)_{i\in I}$ be a system
of trivialisations of the restriction $\resto{P}{X\setminus Z}$. It suffices to
note that $(\theta^j)_{j\in Z\cup I}$ is a system of trivialisations of $P$, and
the associated cocycle $\alpha\in Z^1((U^j)_{j\in Z\cup I},{\cal G})$ belongs to
$Z^1((U^j)_{j\in Z\cup I},{\cal G}^\chi_Z)$. 

\end{proof}


A special case of Proposition \ref{framez} concerns the case $G=\C^*$, $\chi=\id_{\C^p*}$, i.e. the case of holomorphic line bundles on $X$ framed along $Z$.  Denote $\Pic(X,Z)$ the set of isomorphism classes of
  pairs $(\lb,\tau)$ consisting of a line bundle $\lb$ on $X$ and a trivialization $\tau$ of $\resto{\lb}{Z}$.   Define the sheaf $_Z\shf{X}^*$ to be the subsheaf of $\shf{X}^*$ that consists of germs of non-vanishing
holomorphic functions that are
equal to $1\in\cstar$ for any point in $Z$. Proposition \ref{framez} yields:
\begin{proposition}
 There is a natural identification $\Pic(X,Z)\iso H^1(X,{_Z\shf{X}^*})$.
\end{proposition}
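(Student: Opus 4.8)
The plan is to deduce this proposition as the special case $G=\cstar$, $\chi=\id_{\cstar}$ of Proposition~\ref{framez}, after making explicit the standard dictionary between $\cstar$-principal bundles and holomorphic line bundles. First I would check that the hypothesis of Proposition~\ref{framez} is satisfied: the character $\chi=\id_{\cstar}$ obviously admits a right inverse, namely $\sigma=\id_{\cstar}$, for which $\chi\circ\sigma=\id$. Hence Proposition~\ref{framez} applies and produces a bijection $F:\coh{1}{X}{{\cal G}_Z^\chi}\to \mathrm{Bun}_{\cstar}(X,Z,\id)$.

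Next I would translate each of the objects appearing in $F$ into line-bundle language. For $G=\cstar$ the sheaf ${\cal G}$ of $\cstar$-valued holomorphic morphisms is exactly $\shf{X}^*$, and the associated-bundle construction ${\cal P}\mapsto {\cal P}\times_{\id}\C$ sets up the classical bijection between isomorphism classes of $\cstar$-principal bundles and isomorphism classes of holomorphic line bundles on $X$. Under this correspondence a $(Z,\id)$-frame of ${\cal P}$, i.e. a trivialisation $\tau:\resto{{\cal P}\times_\chi\C}{Z}\xra{\simeq} Z\times\C$, is precisely a trivialisation of $\resto{\lb}{Z}$ for the line bundle $\lb={\cal P}\times_{\id}\C$. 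Consequently $\mathrm{Bun}_{\cstar}(X,Z,\id)$ is canonically identified with the set $\Pic(X,Z)$ of isomorphism classes of pairs $(\lb,\tau)$.

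It then remains to identify the coefficient sheaf. Since $\chi=\id$, the defining condition $\resto{(\chi\circ f)}{U\cap Z}=1$ for ${\cal G}_Z^\chi(U)$ reads $\resto{f}{U\cap Z}=1$, so ${\cal G}_Z^\chi$ is exactly the subsheaf ${_Z\shf{X}^*}$ of germs of non-vanishing holomorphic functions equal to $1$ at the points of $Z$. Substituting these identifications into the bijection $F$ yields the desired natural identification $\Pic(X,Z)\iso \coh{1}{X}{{_Z\shf{X}^*}}$.

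I do not expect any genuine obstacle here: the entire content is already contained in Proposition~\ref{framez}, and the only work is the bookkeeping of the $\cstar$-bundle/line-bundle dictionary together with a check that all identifications are functorial in the framing data. The single point worth verifying with care is that the framing notions match on the nose --- that the trivialisation $\tau_\alpha$ produced from a cocycle $\alpha\in Z^1({\cal U},{_Z\shf{X}^*})$ in the proof of Proposition~\ref{framez} is the one induced by the constant value $1$ along $Z$ --- but this is immediate from the construction.
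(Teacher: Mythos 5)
Your proposal is correct and matches the paper exactly: the paper obtains this proposition as the immediate special case $G=\cstar$, $\chi=\id_{\cstar}$ of Proposition~\ref{framez}, with the right inverse $\sigma=\id_{\cstar}$ and the identification of ${\cal G}_Z^\chi$ with ${_Z\shf{X}^*}$ left implicit. Your write-up simply makes explicit the same bookkeeping the paper takes for granted.
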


\subsection[Appendix: A representability theorem]{Appendix 2}\label{functorstuff}
Let ${\cat C}$ be a category and $F:\cat{C} \ra \Set$ be contravariant functor.
Recall that $F$ is called representable if there exists an object $U\in {\cal
  O}b({\cat C})$ and a natural isomorphism $F \simeq \Hom(\cdot,U)$. A
representation of $F$ is a pair $(U,\Phi)$ where $\Phi:F\xra{\simeq} \Hom(\cdot, U)$ is
a natural isomorphism. Recall that a universal element of $F$ is a pair
$(U,\alpha)$ consisting of an object $U\in {\cal O} b(\cat{C})$ and an element
$\alpha\in F(U)$ such that for every pair $(X,\beta)$ with $\beta \in F(X)$
there exists a unique morphism $f\in \Hom_{\cat{C}}(X,V)$ such that
$F(f)(\alpha)=\beta$.
\begin{remark}
  Universal elements of $F$ are in a bijective correspondance with
  representations of $F$.
\end{remark}
Let now ${\cat A}$ be a  category, let $P\in {\cal O}b({\cat A})$, and let ${\cat
  A}/P$ be the category whose objects are morphisms $T\to P$ with $T\in{\cal
  O}b({\cat A})$ and whose morphisms are commutative triangles. Let $B:({\cat
  A}/P)^{op}\to {\Set }$ be a contravariant functor, and let $\tilde B:{\cat
  A}^{op}\to \Set$ the contravariant functor defined on objects by 
$$\tilde B(T):=\{(f,\beta)|\ f\in \Hom(T,P),\ \beta\in B(f)\}
$$
and on morphisms $u:S\to T$ by 
$$\tilde B(u)(f,\beta):=(f\circ u, B(u_f)(\beta))\,,
$$
where $u_f\in\Hom(f\circ u, f)\in\Hom_{{\cat A}/P}(f\circ u, f)$ is the morphism
associated with $u$.

\begin{proposition}\label{app:rep}
Let $V\in {\cal O}b(\cat{A}), v\in \Hom(V,P)$ and $\alpha\in
  B(V\xra{v} P)$. Then $(V\xra{v} P, \alpha)$ is a universal element of $B$ if
  and only if $(V, (v,\alpha))$ is a universal element of ${\tilde B}$.
  In particular, $B$ is representable if and only if ${\tilde B}$ is representable.
\end{proposition}

\begin{proof}
  Suppose that $(V\xra{v} P, \alpha)$ is a universal element of $B$ and let
  $(f,\beta)\in {\tilde B}(T)$. Then there exists a unique morphism $g\in \Hom_P(T_f,
  V_v)$ such that $B(g)(\alpha)=\beta$. But that just means that ${\tilde
    B}(g)(v,\alpha)=(f,\beta)$. Clearly $g$ is unique as a morphism in
  $\Hom(T,V)$ so $(V,(v,\alpha))$ is a universal element of ${\tilde B}$. The
  other direction of the proof follows from the same argument.
\end{proof}

\bibliographystyle{alpha}
\bibliography{representation.bib}
\end{document}